	\newtheoremstyle{slanted}
	{}
	{}
	{\slshape}
	{}
	{\bfseries}
	{.}
	{ }
	{}
	\theoremstyle{slanted}
	\newtheorem{theo}{Theorem}[section]
	\newtheorem{prop}[theo]{Proposition}
	\newtheorem{lemma}[theo]{Lemma}
	\newtheorem{definition}[theo]{Definition}
	\newtheorem{corollary}[theo]{Corollary}
	\newtheorem{remark}[theo]{Remark}
	\newtheorem{example}[theo]{Example}
	\newcommand{\egdef}{:=}
	\DeclareMathOperator{\Id}{Id}	
	\newcommand{\tend}[3][]{\xrightarrow[#2\to#3]{#1}}
	\newcommand{\EE}{\mathbb{E}}
	\newcommand{\ind}[1]{\mathbbmss{1}_{#1}} 
	\newcommand{\ZZ}{\mathbb{Z}}
	\newcommand{\RR}{\mathbb{R}}
	\newcommand{\PP}{\mathbb{P}}
	\newcommand{\A}{\mathcal{A}}
	\newcommand{\F}{\mathscr{F}}
	\renewcommand{\P}{\mathscr{P}}
\title{Ergodic Poisson splittings}
\author{\'{E}lise Janvresse, Emmanuel Roy and Thierry de la Rue}
\address{\'Elise Janvresse: 
Laboratoire Amiénois de Mathématique Fondamentale et Appliquée, CNRS-UMR 7352, Université de Picardie Jules Verne, 33 rue Saint Leu, F80039 Amiens cedex 1,
France.}
\email{Elise.Janvresse@u-picardie.fr}
\address{Emmanuel Roy: Laboratoire Analyse, Géométrie et Applications, Université Paris 13 Institut Galilée,
99 avenue Jean-Baptiste Clément
F93430 Villetaneuse, France.}
\email{roy@math.univ-paris13.fr}
\address{Thierry de la Rue:
Laboratoire de Mathématiques Rapha\"el Salem,
Université de Rouen, CNRS,
Avenue de l'Université,
F76801 Saint \'Etienne du Rouvray, France.}
\email{Thierry.de-la-Rue@univ-rouen.fr}
\thanks{Research partially supported by French research group GeoSto
(CNRS-GDR3477)}
\begin{document}
\bibliographystyle{amsplain}

\begin{abstract}
  In this paper we study splittings of a Poisson point process which are equivariant under a conservative transformation.
  We show that, if the Cartesian powers of this transformation are all ergodic, the only ergodic splitting is the obvious one,
  that is, a collection of independent Poisson processes.
  We apply this result to the case of a marked Poisson process: under the same hypothesis, the marks are necessarily independent of 
  the point process and i.i.d.
  Under additional assumptions on the transformation, a further application is derived, giving a full description of the structure of a 
  random measure invariant under the 
  action of the transformation.  
\end{abstract}

\keywords{Poisson point process, Random measure, Splitting, Thinning, Poisson suspension, Joinings}

\maketitle

\section{Introduction}

Thinning and splitting are classical operations when studying point processes. Thinning consists in removing points according to some rule, whereas the related notion of splitting means decomposing the point process as the sum of several other point processes. 
It is well known that thinning a Poisson point process by choosing to remove points according to  independent coin tosses yields a new Poisson process of lower (but proportional) intensity. 
Moreover, this procedure gives rise to a splitting 
of the original Poisson process into a sum of two independent Poisson processes.

In recent years, some new results on thinnings of Poisson process have emerged. 
In particular, it is shown in \cite{Ball} that it is possible to deterministically choose points from a homogeneous Poisson 
point process on $\RR$ to get another homogeneous Poisson process of lower intensity. Moreover, it is possible to proceed 
in a translation equivariant way. This result has been further refined in \cite{HolLyoSoo} by extending it to $\RR^d$ and 
replacing translation equivariance by  isometry equivariance. Moreover, the remaining points were also shown to form a 
homogeneous Poisson point process.

The isometry or translation equivariance plays a key role here as Meyerovitch showed in \cite{Meyerovitch2013} that it is 
not possible to deterministically thin a Poisson point process in an equivariant way with respect to a transformation which 
is conservative ergodic on the base space (the translations of $\RR^d$ yields dissipativity).

In the present paper, we are also interested in thinnings and splittings of Poisson processes 
which are equivariant under some dynamics.
The difference with \cite{Ball,HolLyoSoo} is that we consider 
thinnings/splittings which are equivariant with respect to a conservative transformation. Moreover, 
contrary to the above-mentioned result of  \cite{Meyerovitch2013}, we allow additional 
randomness (yet keeping ergodicity) in the thinning/splitting procedure.
We get the following result, phrased in our terminology where equivariance and homogeneity are expressed through the concepts of \emph{$T$-point processes} and \emph{$T$-splittings}:
If $T$ is an infinite measure preserving map with \emph{infinite ergodic index} (\textit{i.e.} all its Cartesian products are ergodic), 
then any ergodic $T$-splitting of a Poisson $T$-point process
yields independent Poisson $T$-point processes (Theorem~\ref{thm:poissonSplitting}).

In the second part of the paper we derive some applications. In Section~\ref{sec:marked}, we show that, for $T$ with infinite 
ergodic index, the only way to get an ergodic marked $T$-point process out of a
Poisson $T$-point process is to take i.i.d. marks, independent of the underlying process (Theorem~\ref{prop:markedPoisson}).
Then we come back to the problem we raised in~\cite{sushis}. In that paper we gave conditions on $T$ under which an ergodic 
$T$-point process with moments of all orders is necessarily a cluster-Poisson process (see~\cite[p.~175]{DaleyVereJonesI}), best described as an independent superposition of shifted Poisson processes (a so-called \emph{SuShi}).
We extend this result in Section~\ref{sec:random_measures} to general random measures (Proposition~\ref{prop:continuous} and Theorem~\ref{thm:discrete}).
This more general framework allows to simplify and improve some disjointness results from~\cite{sushis} in the last section.

\subsection{$T$-random measures and $T$-point processes}

Let $X$ be a complete separable metric space and $\A$ be its Borel $\sigma$-algebra.  
Define $\widetilde{X}$ to be the space of \emph{boundedly finite measures} (also called  \emph{locally finite measures}) on $\left(X,{\A}\right)$, 
that is to say measures giving finite mass to any bounded Borel subset of $X$.
We refer to~\cite{Kallenberg2017} for the topological properties of $\widetilde{X}$. In particular, $\widetilde{X}$ can be turned into a complete separable metric space,
that we equip with its Borel $\sigma$-algebra $\widetilde{\A}$.

We denote by $X^{*}\subset \widetilde{X}$ the subspace of simple counting measures, \textit{i.e.} whose elements are of the form 
\[
\xi=\sum_{i\in I}\delta_{x_{i}},
\]
where $I$ is at most countable, $x_i\neq x_j$ whenever $i\neq j$, and any bounded subset $A\subset X$ contains finitely many points of the family  $\{x_i\}_{i}$. 
We define $\A^*$ as the restriction of $\widetilde{\A}$ to $X^*$.

\medskip

Throughout the paper, we fix a boundedly finite and continuous measure $\mu$ on $X$ with $\mu(X)=\infty$, and an invertible transformation $T$ on $X$ preserving $\mu$. 
We set
\[{\A}_{f}:=\left\{ A\in{\A},\;0<\mu\left(A\right)<+\infty\right\}.\]

For any measure $\xi$ on $X$, we define $T_*(\xi)$ as the pushforward measure of $\xi$ by $T$: for any $A\in\A$,
\[
  T_*(\xi)(A):=\xi(T^{-1}A).
\]
In particular, if $\xi=\sum_{i\in I}\delta_{x_{i}}$, then $T_{*}\left(\xi\right)=\sum_{i\in I}\delta_{T\left(x_{i}\right)}$.

As we already noticed in~\cite{sushis}, the property of bounded finiteness may be lost by the action of $T$. 
Nevertheless, if $m$ is a $\sigma$-finite measure on $\widetilde{X}$ which is concentrated on $\bigcap_{n\in\ZZ}T_*^{-n}\widetilde{X}$, 
it makes sense to consider the $T_*$-invariance of $m$. In this case, $T_*(\xi)\in \widetilde{X}$ for $m$-almost all $\xi\in \widetilde{X}$, 
and $(\widetilde{X},\widetilde{\A},m,T_*)$ is an invertible measure preserving dynamical system.

\medskip


The following definition generalizes the notion of $T$-point process introduced in~\cite{sushis}.

\begin{definition}
A \emph{$T$-random measure} is a random variable $N$ defined on some probability space $\left(\Omega,\F ,\mathbb{P}\right)$ with
values in  $\left(\widetilde{X},\widetilde{\A}\right)$
such that
\begin{itemize}
\item for any set $A\in{\A}$, $N\left(A\right)=0$ $\mathbb{P}$-a-s.
whenever $\mbox{\ensuremath{\mu}}\left(A\right)=0$;
\item there exists a measure preserving invertible transformation $S$ on
$\left(\Omega,\F ,\mathbb{P}\right)$, such that for any set $A\in{\A}$,
$N\left(A\right)\circ S=N\left(T^{-1}A\right)$.
\end{itemize}
\end{definition}

Thus, a $T$-random measure $N$ implements a factor relationship between the dynamical systems $\left(\Omega,\F ,\mathbb{P},S\right)$
and $\left(\widetilde{X},\widetilde{\A},m,T_{*}\right)$, where $m$
is the pushforward measure of $\,\mathbb{P}$ by $N$.
We say that \emph{$N$ is ergodic} whenever $\left(\widetilde{X},\widetilde{\A},m,T_{*}\right)$ is ergodic. In particular $N$ is ergodic as soon as $\left(\Omega,\F ,\mathbb{P},S\right)$ is itself ergodic.

The \emph{intensity} of  a $T$-random measure $N$ is the $T$-invariant measure on $X$ defined by the formula 
$A\in{\A}\mapsto\mathbb{E}\left[N\left(A\right)\right]$. It  is absolutely continuous with
respect to $\mu$ and if it is $\sigma$-finite, it is a multiple of $\mu$, by ergodicity of $\left(X,{\A},\mu,T\right)$. 
In this  case, we say that $N$ is \emph{integrable}.
More generally, the higher order moment measures can be defined as follows.

\begin{definition}
Let $n\ge1$. A $T$-random measure $N$ on $\left(\Omega,\F ,\mathbb{P},S\right)$
is said to have \emph{moments of order $n$} if, for  all  bounded $A\in{\A}$, $\mathbb{E}\left[\left(N\left(A\right)\right)^{n}\right]<+\infty$.
In this case,  the formula
\[
M_{n}^{N}(A_1\times\cdots\times A_n) \egdef \mathbb{E}\left[N\left(A_{1}\right)\times\cdots\times N\left(A_{n}\right)\right]
\]
($A_1,\ldots,A_n\in\A$) defines a boundedly finite $T^{\times n}$-invariant measure
$M_{n}^{N}$ on $\left(X^{n},{\A}^{\otimes n}\right)$ called
the \emph{$n$-order moment measure}.

A $T$-random measure with moments of order $2$ is said to be \emph{square integrable}.
\end{definition}

A \emph{$T$-point process} is a $T$-random measure taking values in $X^*$. In this case, for $\omega\in\Omega$, 
we identify $N(\omega)$ with the corresponding set of points in $X$.  
The most important $T$-point processes are Poisson point processes, let
us recall their definition.
\begin{definition}\label{def:Poisson}
A random variable $N$ with values in $(X^*, \A^*)$ is a \emph{Poisson point process of intensity $\mu$} if 
for any $k\ge1$, for any mutually disjoint sets $A_{1},\dots,A_{k}\in{\A}_{f}$,
the random variables $N\left(A_{1}\right),\dots,N\left(A_{k}\right)$
are independent and Poisson distributed with respective parameters
$\mu\left(A_{1}\right),\dots,\mu\left(A_{k}\right)$. 
\end{definition}

Such a process always exists, and its distribution $\mu^{*}$ on $X^*$ is uniquely determined by the preceding conditions.
%
%

Since $T$ preserves $\mu$, one easily checks that $T_{*}$ preserves~$\mu^{*}$. And defining $N$ on the probability space $\left(X^{*},{\A}^{*},\mu^{*}\right)$ as the identity map provides an example of a $T$-point process, the underlying measure-preserving transformation being $S=T_*$ in this case.

\begin{definition}
The probability-preserving dynamical system $\left(X^{*},{\A}^{*},\mu^{*},T_{*}\right)$ is called
the \emph{Poisson suspension} over the base $\left(X,{\A},\mu,T\right)$.
\end{definition}

The basic result (see \textit{e.g.}~\cite{Roy2007}) about Poisson suspensions states that
$\left(X^{*},{\A}^{*},\mu^{*},T_{*}\right)$ is ergodic (and
then weakly mixing) if and only if there is no $T$-invariant
set in ${\A}_{f}$. In particular this is the case if $\left(X,{\A},\mu,T\right)$
is ergodic and $\mu$ infinite.

We also recall the classical isometry formula that will be useful several
times in this paper: for $f,g\in L^{1}\left(\mu\right)\cap L^{2}\left(\mu\right)$,
\begin{multline}
  \label{eq:isometry}
  \EE_{\mu^*}\left[ \left( \int_X f(x)\, N(dx) - \int_X f(x)\, \mu(dx) \right)  
  \left( \int_X g(x)\, N(dx) - \int_X g(x)\, \mu(dx) \right)  \right]  \\
  =\int_{X}f\left(x\right)g\left(x\right)\mu\left(dx\right).
\end{multline}

%
%
%
%
 
 \begin{remark} 
The notion of $T$-random measure with intensity $\mu$ can be interpreted in terms
of \emph{quasifactors} as introduced by Glasner and Meyerovitch.
Glasner defined in~\cite{Glasner1983} a quasifactor of a  probability
measure preserving system $\left(X,\mathcal{A},\mu,T\right)$ as a probability measure preserving system 
$\left(\widetilde{X},\widetilde{\mathcal{A}},m,T_{*}\right)$
where $\mathbb{E}_{m}\left[N\left(A\right)\right]=\mu\left(A\right)$. 
(Here, $N$ is the random variable defined by the identity on $\widetilde{X}$, and in the case where $\mu$ is a probability measure, 
$m$ is in fact concentrated on the subset of probability measures on $X$.)
Meyerovitch in~\cite{Meyerovitch2011} extended this definition to the case where $\mu$
is infinite (but $m$ is still a probability measure).
Thus
\begin{itemize}
  \item Poisson suspensions appear as natural example of ergodic quasifactors. 
  \item any $T$-random measure $N$ with intensity $\mu$ on $(\Omega,\F,\PP,S)$ gives rise to the quasifactor defined by $m:=N_*(\PP)$;
  \item any quasifactor $\left(\widetilde{X},\widetilde{\mathcal{A}},m,T_{*}\right)$ is associated to the $T$-random measure $N:=\Id$ on the probability space $\left(\widetilde{X},\widetilde{\mathcal{A}},m,T_{*}\right)$.
\end{itemize}

In Section~\ref{sec:infinite_quasifactors}, we will consider yet another case, namely when $m$
is an infinite measure, and use the terminology \emph{$\infty$-quasifactor}
in this case.
\end{remark}

\subsection{Splittings}
\begin{definition}
Let $N$ be a $T$-point process defined on the dynamical system $\left(\Omega,\mathcal{F},\mathbb{P},S\right)$. 
For $1\le k\le \infty$, a \emph{$T$-splitting of order $k$ of $N$} is a finite or countable family of $T$-point
processes $\left\{ N_{i}\right\} _{0\le i< k}$ defined on $\left(\Omega,\mathcal{F},\mathbb{P},S\right)$
so that $N=\sum_{0\le i <k}N_{i}$. 
\end{definition}

We use the terminology ``$T$-splitting" in the above definition to insist on the equivariance of the splitting with respect to the underlying transformation $T$. 
In this paper however, whenever we deal with $T$-point processes, the splittings are always assumed to be $T$-splittings, and we will omit the ``$T$" in the sequel.

\medskip

The splitting is said to be \emph{ergodic} if the joining generated by $\left\{ N_{i}\right\} _{0\le i< k}$ is ergodic.
In the situation $N^\prime\le N$, the usual terminology considers $N^\prime$ as a \emph{thinning} of $N$, and we get $(N^\prime , N-N^\prime)$ as a splitting of order $2$.

\medskip

A \emph{Poisson splitting of order $k$} is a splitting such that $\left\{ N_{i}\right\} _{0\le i< k}$ are independent Poisson processes.
(In this case, $N$ itself has to be a Poisson $T$-point process.)

\medskip

As we mentioned in the introduction, under the assumption of conservativity and ergodicity of $T$, Meyerovitch~\cite{Meyerovitch2013} proved that, in the canonical space
$(X^*,\A^*,\mu^*,T_*)$ of the Poisson suspension, there exists no splitting of the canonical Poisson $T$-point process. But of course 
a splitting can exist in a larger probability space (for example a product space, in which we can find two independent Poisson $T$-point processes).

\subsection{Properties of Cartesian powers of $T$}

 Let us end the introduction by saying a few words about some properties of the Cartesian products of the underlying infinite measure preserving transformation that we will be dealing with. 
 
 It is well known, in the finite measure setting, that a weak mixing transformation has infinite ergodic index (all its Cartesian powers are ergodic). Moreover, if it is not weak mixing then its Cartesian square is already not ergodic. The situation is therefore pretty clear. In the infinite measure case however, the picture is definitely not as simple. It was first observed in~\cite{KakPar} that all intermediate situations may occur: For any $k\ge 1$, there is a transformation with first $k$ Cartesian products ergodic whereas the $k+1$ Cartesian product is not.
 The same authors also give examples of infinite measure preserving transformations with infinite ergodic index. All these examples fall into the Markov chain category.
 
 Since then, as the zoo of infinite measure preserving transformations developed, various examples of transformations having infinite ergodic index or not were built (see~\cite{AFS1997} where  the so-called \emph{infinite Chacon Transformation} --- an infinite measure preserving version of the classical Chacon transformation --- is shown to have infinite ergodic index).
 
 In the last part of the paper, we will assume a much stronger property of $T$, 
 which can be viewed as a strong version of the Radon minimal self-joinings property introduced by Danilenko in~\cite{Danilenko2018}, 
 and roughly saying that the Cartesian powers of the transformation admit as few invariant measures as possible (see Definition~\ref{def:haddock}). 
 An example of a transformation enjoying this property, the \emph{nearly finite Chacon transformation}, is described in~\cite{nfc}.

\section{Splitting of Poisson $T$-point processes}

For each $n\ge1$, we denote by $\mathscr{P}_{n}$ the set of all
partitions of $\left\{ 1,\dots,n\right\} $. Given $\pi\in\mathscr{P}_{n}$,
we define a measure on $X^{n}$ by
\[
m_{\pi}\left(A_{1}\times\cdots\times A_{n}\right):=\prod_{P\in\pi}\mu\left(\cap_{i\in P}A_{i}\right).
\]
For a given $n$, these measures are $T^{\times n}$-invariant and mutually singular. The measure corresponding to the trivial partition with a 
single atom is called the $n$-diagonal measure, and is concentrated on $D_n:=\{(x_1,\ldots,x_n)\in X^n: x_1=\cdots=x_n\}$.

It is well known that the $n$-order moment measure of the Poisson process of
intensity $\mu$ takes the form
\[
\sum_{\pi\in\mathscr{P}_{n}}c_{\pi}m_{\pi}.
\]
for positive coefficients $c_{\pi}$, $\pi\in\mathscr{P}_{n}$.
Moreover, if $N$ is a Poisson process of intensity $\alpha\mu$, then the $n$-order moment measure equals
\[
\sum_{\pi\in\mathscr{P}_{n}}c_{\pi}\alpha^{\#\pi}m_{\pi}.
\]

In the context of a $T$-point process, it turns out that the existence for each moment measure of a decomposition 
as a linear combination of the measures $m_\pi$  characterizes Poisson processes.
This is the object of the following theorem, whose proof is hidden in Theorem~3.2 in~\cite{sushis}. Although the argument is almost word-for-word
the same, we repeat the proof here since the assumptions are far more general, and it would be cumbersome to explain
the differences without giving all the details.

\begin{theo}
Let $\left(X,\mathcal{A},\mu,T\right)$ be an infinite measure preserving dynamical system with no invariant set of positive finite measure.
Let $N$ be a $T$-point process with moments of all orders defined on $\left(\Omega,\mathcal{F},\mathbb{P},S\right)$. 

Then $N$ is a Poisson process if and only if $N$ is ergodic and for all $n \ge 1$, there exist nonnegative numbers $\alpha_\pi$ such that
\[
M_{n}^{N}=\sum_{\pi\in\mathscr{P}_{n}}\alpha_{\pi}m_{\pi}.
\]
\end{theo}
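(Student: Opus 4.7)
The forward direction is immediate: if $N$ is Poisson of intensity $\alpha\mu$, the stated formula for $M_n^N$ with $\alpha_\pi = \alpha^{\#\pi}$ is precisely the classical computation recalled just before the theorem, and ergodicity of $N$ follows from the fact that the Poisson suspension over $(X,\A,\mu,T)$ is ergodic whenever $T$ has no invariant set of positive finite measure, which is our standing hypothesis.

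For the converse, set $\alpha := \alpha_{\{\{1\}\}}$, extracted from $M_1^N = \alpha \mu$. The plan is to prove $\alpha_\pi = \alpha^{\#\pi}$ for every $\pi \in \mathscr{P}_n$ and every $n \ge 1$. Once this is established, $N$ will have the same moment sequence as a Poisson process $P$ of intensity $\alpha\mu$; the Poisson distribution being moment-determinate, the joint laws of $(N(A_1),\dots,N(A_k))$ agree with those of $(P(A_1),\dots,P(A_k))$ for all $A_1,\dots,A_k \in \A_f$, so $N$ and $P$ have the same law.

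To show $\alpha_\pi = \alpha^{\#\pi}$, observe first that the coefficients are unambiguously defined since the measures $m_\pi$ are mutually singular, so each $\alpha_\pi$ can be read off by restricting $M_n^N$ to the generalized diagonal stratum carrying $m_\pi$. I would proceed by induction on $n$: for any partition $\pi \in \mathscr{P}_n$ containing a block of size $\ge 2$, collapsing that block expresses the restriction of $M_n^N$ to the corresponding stratum in terms of a moment measure of strictly smaller order, to which the inductive hypothesis applies. This pins down all $\alpha_\pi$ except that of the finest partition $\pi_0 = \{\{1\},\dots,\{n\}\}$, whose coefficient is the density (with respect to $\mu^{\otimes n}$) of the $n$-th factorial moment measure of $N$ on the complement of all diagonals.

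The remaining equality $\alpha_{\pi_0} = \alpha^n$ is the step that genuinely uses ergodicity of $N$. Following the argument of Theorem~3.2 in \cite{sushis}, one applies the isometry formula iterated to order $n$ to centered integrals $\int f \, dN - \int f \, d(\alpha\mu)$ for $f \in L^1(\mu) \cap L^2(\mu)$, and invokes the mean ergodic theorem on $(\widetilde{X}, m, T_*)$ applied to a suitable polynomial functional of $N$ to force the off-diagonal coefficient to match its Poisson counterpart; any deviation would produce a non-constant $T_*$-invariant $L^2$-function on the suspension, contradicting ergodicity of $N$. This bookkeeping step, which combines the inductive control on coarser partitions with the ergodic-theoretic input, is the main obstacle; the authors have already carried it out in \cite{sushis}, and it transfers without essential change to the present, more general setting.
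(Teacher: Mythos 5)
Your handling of the coefficients $\alpha_\pi$ for partitions containing a block of size $\ge 2$ is correct and genuinely different from the paper's route: since $N$ is simple, restricting $N^{\otimes n}$ to the closed diagonal $\{x_i=x_j \text{ for } i,j\in P\}$ and collapsing the block $P$ identifies the corresponding piece of $M_n^N$ with a moment measure of order $n-\#P+1$, and mutual singularity of the $m_\pi$ then yields $\alpha_\pi=\alpha_{\bar\pi}$ for the collapsed partition $\bar\pi$, which has the same number of blocks. This is an ergodicity-free induction that pins down every coefficient except that of the finest partition $\pi_0$. The paper goes the other way: it computes the coefficient of the \emph{coarsest} partition (the $n$-diagonal) directly via a generating sequence of partitions of a set $A\in\A_f$, and recovers all the others from the Ces\`aro averages $\frac{1}{\ell}\sum_{k\le\ell}\EE\bigl[\prod_{i\in K}N(A_i)\prod_{i\in K^c}N(A_i)\circ S^k\bigr]$ over proper subsets $K$. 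Your collapsing step would in fact streamline that induction, and your concluding appeal to moment-determinacy of the Poisson law is fine (it is the content of Lemma~3.1 of \cite{sushis}).

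The gap is in the one step you yourself flag as the main obstacle, $\alpha_{\pi_0}=\alpha^{n}$, and your sketch of it does not work as described. The isometry formula (\ref{eq:isometry}) is a property of the Poisson measure $\mu^*$; it cannot be applied to $N$, whose law is exactly what is to be identified, so ``iterating the isometry formula'' for $N$ is circular. Nor is the mechanism that ``a deviation would produce a non-constant $T_*$-invariant $L^2$-function'': ergodicity enters only through the mean ergodic theorem giving convergence of time averages to products of expectations. Most importantly, your converse never invokes the standing hypothesis that $T$ has no invariant set of positive finite measure, yet it is indispensable precisely here. The actual argument runs as follows: with $K=\{1\}$, the ergodic theorem gives $\frac{1}{\ell}\sum_{k=1}^{\ell}\EE\bigl[N(A_1)\prod_{i\ge2}N(A_i)\circ S^k\bigr]\to\alpha\mu(A_1)\,M_{n-1}^N(A_2\times\cdots\times A_n)$; rewriting the same quantity as $\sum_\pi\alpha_\pi\,\frac{1}{\ell}\sum_k m_\pi(A_1\times T^{-k}A_2\times\cdots\times T^{-k}A_n)$, every partition linking index $1$ to some $j\ge2$ contributes a term bounded by $C\mu(T^{-k}A_j\cap A_1)$, whose Ces\`aro average tends to $0$ exactly because there is no invariant set of positive finite measure, while partitions in which $\{1\}$ is a singleton contribute constants. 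Comparing the $\mu^{\otimes n}$-components of the two limits (using the induction hypothesis on $M_{n-1}^N$) gives $\alpha_{\pi_0}=\alpha\cdot\alpha^{n-1}$. Without this, or an equivalent computation, the proof is incomplete.
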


\begin{proof}

Only one direction needs to be detailed.

We can assume that $\left(\Omega,\mathcal{F},\mathbb{P},S\right)$ is ergodic. For $n=1$, we obtain the intensity of $N$ as a multiple of $\mu$, say $\alpha\mu$ for some
$\alpha\ge0$.

We first point out that for each $n\ge1$, the weight of the $n$-diagonal
measure is~$\alpha$ (this is valid for any point process of intensity
$\alpha\mu$). Indeed, using a set $A\in{\A}_{f}$, and
$\left(\left(A_{i}^{\ell}\right)_{1\le i\le p_\ell}\right)_{\ell\ge1}$ a generating sequence of
partitions of $A$, we get 
\begin{multline*}
  \sum_{i=1}^{p_\ell}M_{n}^N\left(A_i^{\ell}\times\cdots\times A_i^{\ell}\right) = \mathbb{E}\left[\sum_{i=1}^{p_\ell}N\left(A_i^{\ell}\right)\cdots N\left(A_i^{\ell}\right)\right]\\
  \tend{\ell}{\infty}  
  \mathbb{E}\left[N\left(A\right)\right]=\alpha\mu\left(A\right)=\alpha\mu\left(A\cap\cdots\cap A\right).
\end{multline*}
On the other hand,
\begin{multline*}
  \sum_{i=1}^{p_\ell}M_{n}^N\left(A_i^{\ell}\times\cdots\times A_i^{\ell}\right) =M_{n}^N\left(\bigsqcup_{i=1}^{p_\ell}A_i^{\ell}\times\cdots\times A_i^{\ell}\right)\\
  \tend{\ell}{\infty} M_{n}^N\left(D_{n}\cap A\times\cdots\times A\right).
\end{multline*}
Therefore $M_{n}^N\left(D_{n}\cap A\times\cdots\times A\right)=\alpha\mu\left(A\cap\cdots\cap A\right)$.
Since the $n$-diagonal measure is the only measure $m_\pi$ charging $D_n$, this  implies as claimed  that the weight of the $n$-diagonal measure is $\alpha$.

We now want to prove by induction that, for all $n\ge1$, $M_{n}^N$
is the $n$-order moment measure of a Poisson process of intensity $\alpha \mu$.
The property is of course satisfied for $n=1$. Let us assume it is satisfied
up to some $n\ge1$, and let $A_{1},\dots,A_{n+1}$ be sets in ${\A}_{f}$.
Pick a nonempty subset $K\subsetneq\left\{ 1,\dots,n+1\right\}$.
By the ergodic theorem, we get
\begin{multline}
\label{eq:1}
 \frac{1}{\ell}\sum_{k=1}^{\ell}\mathbb{E}\left[\prod_{i\in K}N\left(A_{i}\right)\left(\prod_{i\in K^{c}}N\left(A_{i}\right)\circ S^{k}\right)\right]\\
 \tend{\ell}{\infty}  \mathbb{E}\left[\prod_{i\in K}N\left(A_{i}\right)\right]\mathbb{E}\left[\prod_{i\in K^{c}}N\left(A_{i}\right)\right]\\
  =  M^N_{\# K }\left(\bigtimes_{i\in K}A_{i}\right)M^N_{(n+1-\# K)}\left(\bigtimes_{i\in K^{c}}A_{i}\right).
\end{multline}
On the other hand,
\begin{align}
  \label{eq:2}& \frac{1}{\ell}\sum_{k=1}^{\ell}\mathbb{E}\left[\prod_{i\in K}N\left(A_{i}\right)\left(\prod_{i\in K^{c}}N\left(A_{i}\right)\circ S^{k}\right)\right]\\
  \nonumber = & \frac{1}{\ell}\sum_{k=1}^{\ell}M^N_{n+1}\left(T^{-\epsilon_{k}\left(1\right)}A_{1}\times\cdots\times T^{-\epsilon_{k}\left(n+1\right)}A_{n+1}\right)\\
  \nonumber = & \sum_{\pi\in\P_{n+1}}\alpha_{\pi}\frac{1}{\ell}\sum_{k=1}^{\ell}m_{\pi}\left(T^{-\epsilon_{k}\left(1\right)}A_{1}\times\cdots\times T^{-\epsilon_{k}\left(n+1\right)}A_{n+1}\right)
\end{align}
where $\epsilon_{k}\left(i\right)\egdef 0$ if $i\in K$, and $\epsilon_{k}\left(i\right)\egdef k$ otherwise. Coming back to the definition of $m_\pi$, we write
\[
  m_\pi\left(T^{-\epsilon_{k}\left(1\right)}A_{1}\times\cdots\times T^{-\epsilon_{k}\left(n+1\right)}A_{n+1}\right)
  = \prod_{P\in\pi} \mu\left( \bigcap_{i\in P} T^{-\epsilon_k(i)} A_i\right).
\]
Observe that, if $K$ is a union of atoms of $\pi$, we have for any $1\le k\le \ell$
\begin{equation}
 \label{eq:4} m_\pi \left(T^{-\epsilon_k\left(1\right)}A_{1}\times\cdots\times T^{-\epsilon_k\left(n+1\right)}A_{n+1}\right)=m_{\pi}\left(A_{1}\times\cdots\times A_{n+1}\right).
\end{equation}

Otherwise, there exists an atom $P\in\pi$ containing indices $i\in K$ and $j\notin K$, hence with $\epsilon_{k}\left(i\right)=0$ and $\epsilon_{k}(j)=k$. We get that for some constant $C$,
\[
  m_\pi\left(T^{-\epsilon_{k}\left(1\right)}A_{1}\times\cdots\times T^{-\epsilon_{k}\left(n+1\right)}A_{n+1}\right) \le C \mu(T^{-k}A_j\cap A_i).
\]
But, since there is no $T$-invariant set of positive finite measure, 
\[
  \frac{1}{\ell}\sum_{k=1}^{\ell}\mu\left(T^{-k}A_j\cap A_i\right) \tend{\ell}{\infty}0.
\]
Let $\P_{n+1}^{K}$ be the set of partitions $\pi\in\P_{n+1}$ where $K$ is a union of atoms of $\pi$.  The above proves that, in the limit as $\ell\to\infty$, the contribution {in~\eqref{eq:2}} of all partitions $\pi\in\P_{n+1}\setminus\P_{n+1}^{K}$ vanishes. Thus we get, using~\eqref{eq:1}, \eqref{eq:2} and \eqref{eq:4},
\[
M^N_{\# K }\left(\bigtimes_{i\in K}A_{i}\right)M^N_{(n+1-\# K)}\left(\bigtimes_{i\in K^{c}}A_{i}\right)=\sum_{\pi\in\P_{n+1}^K} \alpha_{\pi}m_{\pi}\left(A_{1}\times\cdots\times A_{n+1}\right).
\]
Since $\emptyset\neq K\subsetneq\left\{ 1,\dots,n+1\right\}$, the decompositions of $M^N_{\# K}$ and $M^N_{(n+1-\# K)}$ only involve the coefficients $\alpha_\pi$, $\pi\in\P_1\cup\cdots\cup\P_n$. Using the mutual singularity of the measures on both sides of the above equality, we see  that all
the coefficients $\alpha_{\pi}$, $\pi\in\P_{n+1}^{K}$, are
completely determined by the coefficients corresponding to partitions in $\P_1\cup\cdots\cup\P_n$. Moreover, the above argument is valid
in particular when $N$ is the Poisson process  of intensity $\alpha\mu$. By letting
$K$ run through all strict subsets of $\left\{ 1,\dots,n+1\right\}$, and using the induction hypothesis,
we identify all but one coefficients of the decomposition
of $M^N_{n+1}$ as those of the Poisson point process of intensity
$\alpha \mu$. The only coefficient that cannot be determined by this method
is the one associated to the trivial partition of $\{1,\ldots,n+1\}$ into a single atom. But this corresponds  to the $(n+1)$-diagonal measure, and we already
know that this coefficient is $\alpha$.

We have proved that the moment measures of any order of $N$ are those of a Poisson
point process of intensity $\alpha \mu$. 
Lemma~3.1 in~\cite{sushis} ensures then that $N$ is  a Poisson
point process of intensity $\alpha \mu$.
\end{proof}

Observe that the conclusion of the proof fails if one does not assume the ergodicity of the point process $N$: think of a mixture of two Poisson point processes
with different intensities.

\smallskip

The action of $T^{\times n}$ on $\left(X^n,\mathcal{A}^{\otimes n},m_{\pi}\right)$ is isomorphic 
to $\left(X^{\#\pi},\mathcal{A}^{\otimes\#\pi},\mu^{\otimes\#\pi},T^{\times\#\pi}\right)$. It is 
therefore ergodic if we assume that $T$ has infinite ergodic index.
With this assumption, we get the following easy consequence for a thinning of Poisson $T$-point process.

\begin{prop}
\label{prop:poissonThinning}
Assume $T$ has infinite ergodic index. 
Let $N$ and $N^{\prime}$ be $T$-point processes defined on the system $\left(\Omega,\mathcal{F},\mathbb{P},S\right)$ such that $N$ is Poisson of intensity $\mu$ and $N^{\prime}\le N$. 
If  $N^{\prime}$ is ergodic, then it is Poisson of intensity $\alpha\mu$ for some $0\le\alpha\le1$.
\end{prop}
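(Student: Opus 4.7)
The plan is to reduce this to the previous theorem by verifying its two hypotheses for $N'$: that $N'$ has moments of all orders and that each moment measure decomposes as a nonnegative linear combination of the measures $m_\pi$.

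First, since $0 \le N'(A) \le N(A)$ pointwise for every $A \in \A$, and the Poisson process $N$ has moments of all orders, we get $\EE[N'(A)^n] \le \EE[N(A)^n] < \infty$ for every bounded $A$ and every $n$. In particular $M_n^{N'} \le M_n^N$ as measures on $X^n$. Since $M_n^N = \sum_{\pi\in\P_n} c_\pi m_\pi$ and the $m_\pi$ are mutually singular $T^{\times n}$-invariant measures, $M_n^{N'}$ is absolutely continuous with respect to $\sum_\pi c_\pi m_\pi$, and mutual singularity forces a unique decomposition
\[
M_n^{N'} = \sum_{\pi\in\P_n} f_\pi \cdot m_\pi,
\]
where each $f_\pi$ is a nonnegative density defined $m_\pi$-almost everywhere.

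The key step is to show that each $f_\pi$ is constant. The measure $M_n^{N'}$ is $T^{\times n}$-invariant (as the moment measure of a $T$-point process), and the decomposition above is unique (again by mutual singularity of the $m_\pi$), so each $f_\pi$ must be $T^{\times n}$-invariant as a function on $(X^n, m_\pi)$. Now, as recalled just before the statement, the system $(X^n, \A^{\otimes n}, m_\pi, T^{\times n})$ is measurably isomorphic to $(X^{\#\pi}, \A^{\otimes\#\pi}, \mu^{\otimes\#\pi}, T^{\times\#\pi})$. Since $T$ has infinite ergodic index, $T^{\times \#\pi}$ is ergodic on this product space, so $f_\pi$ is equal $m_\pi$-almost everywhere to a nonnegative constant $\alpha_\pi$. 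This yields $M_n^{N'} = \sum_{\pi\in\P_n} \alpha_\pi m_\pi$, which is precisely the structural hypothesis of the preceding theorem.

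Combined with the ergodicity assumption on $N'$, the theorem then applies and $N'$ is a Poisson $T$-point process; its intensity is a multiple $\alpha\mu$ of $\mu$ by the $n=1$ case of the decomposition. Finally, $N' \le N$ gives $\alpha\mu(A) = \EE[N'(A)] \le \EE[N(A)] = \mu(A)$ for every $A \in \A_f$, hence $0 \le \alpha \le 1$. The only real obstacle is the identification of the invariant densities $f_\pi$ as constants, which is precisely where the infinite ergodic index hypothesis enters through the ergodicity of every Cartesian power $T^{\times \#\pi}$.
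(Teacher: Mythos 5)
Your proof is correct and follows essentially the same route as the paper: dominate $M_n^{N'}$ by $M_n^{N}=\sum_\pi c_\pi m_\pi$, use the ergodicity of each $(X^n,m_\pi,T^{\times n})$ (supplied by the infinite ergodic index via the isomorphism with $T^{\times\#\pi}$) to force the invariant densities to be constants, and then invoke the preceding theorem for the ergodic process $N'$. The paper states this more tersely by saying that $\sum_\pi c_\pi m_\pi$ is the ergodic decomposition of $M_n^N$ and that $M_n^{N'}\le M_n^N$ must therefore share the same ergodic components; your explicit Radon--Nikodym argument is just an unpacking of that step.
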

\begin{proof}
If $T$ has infinite ergodic index, all the measures $m_{\pi}$, $\pi\in\mathscr{P}_{n}$,
$n\ge1$, are ergodic with respect to $T^{\times n}$, therefore, the formula $\sum_{\pi\in\mathscr{P}_{n}}c_{\pi}m_{\pi}$
is precisely the ergodic decomposition of the moment measure $M_{n}^{N}$ of the Poisson process $N$, with respect to $T^{\times n}$.

Now, as $N^{\prime}\le N$, we get for each $n\ge1$, $N^{\prime}\otimes\cdots\otimes N^{\prime}\le N\otimes\cdots\otimes N$, and thus $M_{n}^{N^{\prime}}=\mathbb{E}\left[N^{\prime}\otimes\cdots\otimes N^{\prime}\left(\cdot\right)\right]\le\mathbb{E}\left[N\otimes\cdots\otimes N\left(\cdot\right)\right]=M_{n}^{N}$.
We therefore obtain the ergodic decomposition of $M_{n}^{N^{\prime}}$
in the form
\[
M_{n}^{N^{\prime}}=\sum_{\pi\in\mathscr{P}_{n}}\alpha_{\pi}m_{\pi}
\]
for some non negative numbers $\alpha_{\pi}$.
We can now apply the preceding theorem to the ergodic $T$-point process $N^{\prime}$ to get the result.
\end{proof}

\begin{example}
Let us describe an example where the conclusion of the Proposition
fails. Consider the so-called ``homogeneous Poisson process'', that
is, the classical Poisson process on the real line with intensity equal to the Lebesgue measure. 
The base transformation is the translation $T:\:x\mapsto x+1$ (in particular, the Poisson $T$-point process is ergodic). Now we can
form a thinning $N^{\prime}$ by keeping the points of $N$ that are
separated by at least $\kappa>0$ from every other points. It is easy
to see that $N^{\prime}$ is an ergodic $T$-point process but obviously
not a Poisson process. Here the proposition does not apply, not really because $T$ is not ergodic (in fact $T$ can be embedded in an ergodic action of $\RR$), but because the Cartesian powers fail to be ergodic (even if we consider the $\RR$-action).
\end{example}

For the proof of the next theorem, we shall need some definitions and the following lemma which is  a particular case of Lemma~2.6 in \cite{sushis}.

\begin{definition}
 We say that a $T$-point process $N$  defined on $\left(\Omega,\F ,\mathbb{P},S\right)$ is \emph{free} 
if for $\PP$-almost all $\omega$, for all $k\in\mathbb{Z}^{*}$,
$N(\omega) \cap N\left(S^{k}\omega\right)= \emptyset$.

Two $T$-point processes $N_{1}$ and $N_{2}$ defined on $\left(\Omega,\F ,\mathbb{P},S\right)$
are said to be \emph{dissociated} if, for $\PP$-almost all $\omega$, for all $k\in\mathbb{Z}$,
$N_{1}(\omega) \cap N_{2}\left(S^{k}\omega\right)= \emptyset$.
\end{definition}

Note that if $N$ is a Poisson $T$-point process, then $N$ is free (Proposition~2.7 in~\cite{sushis}).

%

\begin{lemma}
\label{lemma:graph measures}Let $N_{1},\dots,N_{n}$ be $n$ $T$-point
processes defined on the ergodic system $\left(\Omega,\F ,\mathbb{P},S\right)$, having moments of all orders.
Assume there exist a real number $c>0$ and some $\pi\in\P_n$ such that for any sets $A_{1},\dots,A_{n}$ in ${\A}_{f}$,
\[
\mathbb{E}\left[N_{1}\left(A_{1}\right)\cdots N_{n}\left(A_{n}\right)\right] \ge
c m_\pi(A_1\times\cdots\times A_n).
\]
Then, for any atom $P\in\pi$, any $A\in{\A}_{f}$, 
\[
  \PP\left(A\cap\bigcap_{i\in P} N_i\neq \emptyset\right) > 0.
\]
In particular, for $i,j\in P$, the processes $N_i$ and $N_j$ are not dissociated.
\end{lemma}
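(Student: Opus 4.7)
The plan is to realize $\#(A\cap\bigcap_{i\in P}N_i)$ as the almost sure limit of a nonincreasing sequence of coarser quantities, each of which the hypothesis already lower-bounds, and to conclude by monotone convergence. Intuitively, for a set $B$ small enough to contain at most one point of the finite configuration $\bigcup_{i\in P}N_i\cap A$, the product $\prod_{i\in P}N_i(B)$ equals $1$ precisely when $B$ contains a point common to all the $N_i$, $i\in P$; refining the partition kills spurious coincidences between distinct points of different processes happening to lie in the same cell.

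Fix $P\in\pi$ and $A\in\A_f$; since any such $A$ contains a bounded subset of positive measure, it is enough to treat bounded $A$. Choose a bounded $B\in\A_f$ and set $Y\egdef\prod_{i\notin P}N_i(B)$. For any measurable partition $A=\bigsqcup_j A_j^\ell$ into $\A_f$-sets, feeding the hypothesis with $A_i=A_j^\ell$ for $i\in P$ and $A_i=B$ otherwise, then summing over $j$, gives
\[
\EE[\Sigma_\ell\cdot Y]\ge c\,\mu(A)\,\mu(B)^{\#\pi-1},\qquad \Sigma_\ell\egdef\sum_j\prod_{i\in P}N_i(A_j^\ell),
\]
because the atom $P$ contributes $\sum_j\mu(A_j^\ell)=\mu(A)$ to $m_\pi$ while each of the other $\#\pi-1$ atoms contributes the constant factor $\mu(B)$. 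Expanding each product shows that when the partition is refined only nonnegative cross terms are shed, so along a refining sequence of partitions of $A$ with mesh tending to~$0$, the sequence $\Sigma_\ell$ is almost surely nonincreasing. Since $N_i(A)<\infty$ a.s.\ and $|P|$ is finite, the set $\bigcup_{i\in P}N_i\cap A$ is almost surely finite and is eventually separated by the partitions; past that stage each $\prod_{i\in P}N_i(A_j^\ell)$ is $0$ or $1$, and $\Sigma_\ell=\#(A\cap\bigcap_{i\in P}N_i)$.

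The moment hypothesis on $N$ ensures $\Sigma_0\cdot Y=\prod_{i\in P}N_i(A)\cdot Y$ is integrable, so monotone convergence gives
\[
\EE\!\left[\#(A\cap\bigcap_{i\in P}N_i)\cdot Y\right]\ge c\,\mu(A)\,\mu(B)^{\#\pi-1}>0,
\]
and since $Y\ge 0$ this forces $\PP(A\cap\bigcap_{i\in P}N_i\neq\emptyset)>0$. The ``in particular'' follows at once: for $i,j\in P$ one has $N_i\cap N_j\supseteq\bigcap_{i'\in P}N_{i'}$, so $\PP(N_i\cap N_j\neq\emptyset)>0$, contradicting dissociation at the shift $k=0$. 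The only delicate step is the identification of the pointwise limit of $\Sigma_\ell$, which hinges on using the a.s.\ finiteness of $\bigcup_{i\in P}N_i\cap A$ to rule out persistent spurious coincidences; after that, the lower bound and the probabilistic conclusion are routine.
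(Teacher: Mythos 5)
Your proof is correct. The paper itself does not prove this lemma (it is quoted as a special case of Lemma~2.6 of the cited SuShis paper), but your argument is self-contained and uses exactly the technique the present paper deploys elsewhere, namely the computation of the diagonal weight of $M_n^N$ via a generating sequence of partitions: plugging a refining partition of $A$ into the atom $P$ and a fixed bounded $B\in\A_f$ into the remaining coordinates, observing that $\Sigma_\ell$ decreases to $\#\bigl(A\cap\bigcap_{i\in P}N_i\bigr)$ once the a.s.\ finite configuration $\bigcup_{i\in P}N_i\cap A$ is separated, and passing to the limit under the uniform lower bound $c\,\mu(A)\,\mu(B)^{\#\pi-1}$. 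All the delicate points are handled or trivially handleable: cells of $\mu$-measure zero contribute nothing to either side since $N_i(A)=0$ a.s.\ when $\mu(A)=0$; a bounded $B\in\A_f$ exists because $\mu$ is boundedly finite with infinite total mass; the reduction to bounded $A$ is legitimate since the conclusion is monotone in $A$; and integrability of $\Sigma_0\cdot Y$ follows from H\"older and the all-orders moment hypothesis, which justifies the (decreasing) monotone convergence. The ``in particular'' clause is the $k=0$ case of dissociation, as you say.
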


\begin{theo}
\label{thm:poissonSplitting}
If $T$ has infinite ergodic index, then any ergodic splitting of a Poisson $T$-point process is Poisson.
\end{theo}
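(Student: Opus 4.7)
The plan is to show first that each $N_i$ is Poisson, and then that the $N_i$'s are mutually independent. The first point is immediate: because the joining is ergodic, each $N_i$, being a factor of the joining, is ergodic; since $N_i\le N$ and $N$ is Poisson (of intensity $\mu$, say), Proposition~\ref{prop:poissonThinning} yields that $N_i$ is Poisson of intensity $\alpha_i\mu$ with $\sum_i\alpha_i=1$.

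For joint independence, I would first prove that the $N_i$'s are pairwise dissociated. Indeed, $N\in X^*$ being simple a.s., the supports of $N_i(\omega)$ and $N_j(\omega)$ are a.s.\ disjoint for $i\neq j$; and the freeness of the Poisson $T$-point process $N$ (Proposition~2.7 of \cite{sushis}) gives $N(\omega)\cap N(S^\ell\omega)=\emptyset$ for all $\ell\neq 0$, hence also $N_i(\omega)\cap N_j(S^\ell\omega)\subseteq N(\omega)\cap N(S^\ell\omega)=\emptyset$. Next, for each tuple $(n_0,\ldots,n_{k-1})$, consider the joint moment measure
\[
M^{(n_0,\ldots,n_{k-1})}\bigl(A^{(0)},\ldots,A^{(k-1)}\bigr) := \EE\left[\prod_{0\le i< k}\prod_{j=1}^{n_i} N_i\bigl(A^{(i)}_j\bigr)\right],
\]
a $T^{\times n}$-invariant measure on $X^n$ (with $n=\sum_i n_i$) dominated by $M_n^N=\sum_{\pi}c_{\pi}m_{\pi}$. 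Since $T$ has infinite ergodic index, the $m_{\pi}$'s are $T^{\times n}$-ergodic and mutually singular, so $M^{(n_0,\ldots,n_{k-1})}=\sum_{\pi}c'_{\pi}m_{\pi}$ with $c'_{\pi}\le c_{\pi}$. Lemma~\ref{lemma:graph measures} combined with pairwise dissociation forces $c'_{\pi}=0$ whenever some atom of $\pi$ mixes indices attached to two distinct $N_i$'s, so only partitions of the form $\pi=\pi_0\sqcup\cdots\sqcup\pi_{k-1}$ with $\pi_l\in\P_{n_l}$ contribute, giving
\[
M^{(n_0,\ldots,n_{k-1})} = \sum_{\pi_0,\ldots,\pi_{k-1}} c_{\pi_0,\ldots,\pi_{k-1}}\, m_{\pi_0}\otimes\cdots\otimes m_{\pi_{k-1}}.
\]

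I would then mimic the ergodic-theoretic averaging argument from the proof of the characterization theorem above. Applied with $F:=\prod_j N_0(A^{(0)}_j)$ and $G:=\prod_{i\ge 1, j} N_i(A^{(i)}_j)$, the ergodicity of $(\Omega,\F,\PP,S)$ (which we may assume by ergodic decomposition) gives
\[
\frac{1}{\ell}\sum_{p=1}^{\ell}\EE\bigl[F\cdot G\circ S^{p}\bigr]\tend{\ell}{\infty}\EE[F]\EE[G]=M_{n_0}^{N_0}\bigl(A^{(0)}\bigr)\, M^{(n_1,\ldots,n_{k-1})}\bigl(A^{(1)},\ldots,A^{(k-1)}\bigr).
\]
Each summand equals $M^{(n_0,\ldots,n_{k-1})}\bigl(A^{(0)},T^{-p}A^{(1)},\ldots,T^{-p}A^{(k-1)}\bigr)$, and the factored decomposition above together with the $T^{\times n_l}$-invariance of each $m_{\pi_l}$ makes this quantity independent of $p$, hence equal to $M^{(n_0,\ldots,n_{k-1})}\bigl(A^{(0)},\ldots,A^{(k-1)}\bigr)$. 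Therefore $M^{(n_0,\ldots,n_{k-1})}=M_{n_0}^{N_0}\otimes M^{(n_1,\ldots,n_{k-1})}$, and an induction on $k$ produces $M^{(n_0,\ldots,n_{k-1})}=\bigotimes_i M_{n_i}^{N_i}$. Since each $N_i$ is Poisson, the finite-dimensional joint distributions of the vectors $(N_0(A^{(0)}_j),N_1(A^{(1)}_j),\ldots)$ are determined by their moments, so this factoring is equivalent to the joint independence of the $N_i$'s.

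The main obstacle is the dissociation-driven decomposition of the joint moment measures into sums of tensor products $m_{\pi_0}\otimes\cdots\otimes m_{\pi_{k-1}}$, where Lemma~\ref{lemma:graph measures} enters crucially; once this structural information is in hand, the ergodic-averaging trick from the characterization theorem applies with essentially no change, the only novelty being that one averages a shift that acts nontrivially on all coordinates except those of $N_0$.
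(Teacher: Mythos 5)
Your proposal is correct and follows essentially the same route as the paper's proof: Poissonianity of each $N_i$ via Proposition~\ref{prop:poissonThinning}, mutual dissociation from the freeness of $N$, the decomposition of the joint moment measure into tensor products $m_{\pi_0}\otimes\cdots\otimes m_{\pi_{k-1}}$ via Lemma~\ref{lemma:graph measures}, and the ergodic-averaging factorization followed by induction on $k$. The only (cosmetic) differences are that you split off $N_0$ rather than $N_k$ and invoke the tensor decomposition directly instead of phrasing it as invariance of $\sigma$ under a partial application of $T$.
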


\begin{proof}

We start with a splitting of order $k< \infty$ of a Poisson $T$-point process $N$ defined on the ergodic dynamical system $\left(\Omega,\F ,\mathbb{P},S\right)$, that is, we have $k$ $T$-point process $N_1,\dots,N_k$  so that $N=N_1+\dots+N_k$.

From Proposition~\ref{prop:poissonThinning}, the $N_j$'s are Poisson processes with respective intensities $\alpha_1\mu,\dots,\alpha_k\mu$ such that $\alpha_1+\dots+\alpha_k=1$

Let $n_{1},\dots,n_{k}$ be positive numbers, $n:=n_{1}+\dots+n_{k}$,  and let $\{Q_1,\ldots,Q_k\}$ be the partition of $\{1,\ldots,n\}$ in subsets of consecutive integers of respective size $n_1,\ldots,n_k$. For any $\left\{ A_{i}\right\} _{1\le i\le n}$ in ${\A}_{f}$, set
\begin{equation}
  \label{eq:sigma}\sigma(A_1\times\cdots\times A_n)\egdef\mathbb{E}\left[\prod_{j=1}^k\prod_{i\in Q_j} N_j(A_i)\right].
\end{equation}
This defines a $T^{\times n}$-invariant measure $\sigma$ on $\left(X^{n},{\A}^{\otimes n}\right)$, for which, as above, since for all $1\le j \le k$ we have $N_j \le N$, $\sigma \le M_{n}^{N}$. Hence $\sigma$ has at most countably many ergodic components, of the form 
$m_\pi$ for some $\pi\in\P_n$.
Observe that the processes $N_{1},\dots,N_{k}$ are mutually dissociated as, for all $1\le j \le k$, $N_j \le N$  and $N$ is free. Therefore, by Lemma~\ref{lemma:graph measures}, 
if the measure $m_\pi$ appears in the ergodic decomposition of $\sigma$, then $\pi$ refines the partition $\{Q_1,\ldots,Q_k\}$. 
Hence, any ergodic component $m_\pi$ of $\sigma$ has the form
\[
 m_\pi(A_1\times\cdots\times A_n)= \prod_{j=1}^k \prod_{P\in\pi,P\subset Q_j} \mu\left(\bigcap_{i\in P}A_i\right) = \prod_{j=1}^k \nu_j\left(\bigtimes_{i\in Q_j}A_i\right),
\]
where each $\nu_j$ is a $T^{\times n_j}$-invariant measure. In particular, any ergodic component of $\sigma$ is invariant by the transformation
$(x_1,\ldots,x_n)\mapsto(y_1,\ldots,y_n)$, where $y_i\egdef Tx_i$ if $i\in Q_k$, and $y_i\egdef x_i$ otherwise. It follows that $\sigma$ itself is invariant by this transformation, hence the expression 
defining $\sigma(A_1\times\cdots\times A_n)$ on the right-hand side of~\eqref{eq:sigma}
is unchanged if we replace $N_k(A_i)$ by $N_k(T^{-1}A_i)$ for all $i\in Q_k$ simultaneously.
Therefore, we can write for any $\left\{ A_{i}\right\} _{1\le i\le n}$ in ${\A}_{f}$ and any $L\ge1$
\begin{align*}
  \mathbb{E}\left[\prod_{j=1}^k\prod_{i\in Q_j} N_j(A_i)\right] & = \frac{1}{L}\sum_{1\le \ell\le L}
  \mathbb{E}\left[\left(\prod_{j=1}^{k-1}\prod_{i\in Q_j} N_j(A_i)\right) \prod_{i\in Q_k} N_k(T^{-\ell} A_i)\right] \\
  & = \mathbb{E}\left[\left(\prod_{j=1}^{k-1}\prod_{i\in Q_j} N_j(A_i)\right) 
  \left(\frac{1}{L}\sum_{1\le \ell\le L} \prod_{i\in Q_k} N_k\circ S^\ell (A_i)\right) \right] .
\end{align*}
By the ergodic theorem, this converges as $L\to\infty$ to 
\[
  \mathbb{E}\left[\prod_{j=1}^{k-1}\prod_{i\in Q_j} N_j(A_i)\right]
  \mathbb{E}\left[ \prod_{i\in Q_k} N_k (A_i)\right].
\]
A straightforward induction on $k$ then yields the equality
\[
  \mathbb{E}\left[\prod_{j=1}^k\prod_{i\in Q_j} N_j(A_i)\right]
  = \prod_{j=1}^k \mathbb{E}\left[\prod_{i\in Q_j} N_j(A_i)\right],
\]
and this is sufficient to obtain the independence between the Poisson
processes.

The case $k=\infty$ is easily deduced from the finite order case.
\end{proof}

\begin{remark}
  For the conclusion of Theorem~\ref{thm:poissonSplitting} to hold, it is in fact enough to assume only that 
  (with the notations of the proof) all but one of the $T$-point processes $N_j$ are ergodic. Indeed, we can then 
  apply the proof in any ergodic component of the joining $(N_1,\ldots,N_k)$, and see that in each of these
  ergodic components we have the same structure of independent Poisson processes. 
  A posteriori we see that there is only one ergodic component.
  In particular, with the assumptions of Proposition~\ref{prop:poissonThinning}, we get that $N'$ and $(N-N')$ are
  independent Poisson processes and thus form a Poisson splitting.
  Moreover, if we remove the assumption of ergodicity of $N'$ in Proposition~\ref{prop:poissonThinning}, 
  we get that the ergodic components of $(N',N-N')$ are necessarily Poisson splittings.
\end{remark}

\section{Application to marked Poisson point processes}

\label{sec:marked}

Here, we deal with so-called marked point processes. Roughly speaking
a marked point process on $X$ with marks in some measurable space $(K,\mathcal{K})$ is a point process
on $X$ whose points carry some information, a mark, taking values
in  $K$. For a $T$-point process, we require the mark to have the \emph{shadowing property}, meaning that
it follows the point when the dynamics on the point process is applied. We thus consider a \emph{marked $T$-point
process} as a $(T\times \Id)$-point process on the bigger space 
$\left(X\times K,\mathcal{A}\otimes\mathcal{K}\right)$
with intensity measure $\widetilde{\mu}$ that projects on $\mu$.

\begin{theo}
\label{prop:markedPoisson}
Let $\left(X,\mathcal{A},\mu,T\right)$ be an infinite measure preserving
dynamical system with infinite ergodic index. Let 
$N$ be an ergodic $(T\times\Id)$-point process 
on $X\times K$.
Assume that $N_{0}:=N\left(\cdot\times K\right)$
is a Poisson point process with intensity $\mu$. Then $N$ is a Poisson point process with
intensity $\mu\otimes\rho$ where $\rho$ is some probability measure
on $K$.
\end{theo}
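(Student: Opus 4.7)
The plan is to slice the marked process along the mark space and reduce everything to Theorem~\ref{thm:poissonSplitting}. For each measurable $B\subset K$, define $N_B$ on $(\Omega,\F,\PP,S)$ by $N_B(A):=N(A\times B)$. Since $(T\times\Id)^{-1}(A\times B)=T^{-1}A\times B$, the equivariance of $N$ transfers to $N_B$, so each $N_B$ is a $T$-point process on $X$ dominated by $N_0$. For any finite measurable partition $(B_j)_{1\le j\le r}$ of $K$, one has $N_0=\sum_{j=1}^{r}N_{B_j}$, so $(N_{B_j})_{j}$ is a $T$-splitting of the Poisson $T$-point process $N_0$. Moreover, the joining of $(N_{B_j})_{j}$ is a measurable factor of the dynamical system generated by $N$, hence ergodic by the ergodicity assumption on $N$. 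Theorem~\ref{thm:poissonSplitting} then yields nonnegative reals $\rho(B_j)$ summing to $1$ such that the $N_{B_j}$'s are independent Poisson point processes with respective intensities $\rho(B_j)\mu$.

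Next I would verify that $B\mapsto\rho(B)$ defines a probability measure on $K$. The value $\rho(B)$ is unambiguous by applying the previous step to the partition $\{B,K\setminus B\}$, and $\rho(K)=1$ since $N_K=N_0$. Finite additivity comes from the same argument applied to any partition containing two disjoint sets $B_1,B_2$; countable additivity follows by invoking the $k=\infty$ case of Theorem~\ref{thm:poissonSplitting} applied to a countable measurable partition of $K$: the intensities of the resulting independent Poisson components must sum to that of $N_0$, giving $\rho(\bigsqcup_n B_n)=\sum_n\rho(B_n)$.

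Finally, to identify the law of $N$, I would compute its Laplace functional on simple non-negative functions of the form $f=\sum_{i,j}c_{ij}\mathbf{1}_{A_i\times B_j}$, where $(A_i)$ is a finite family of pairwise disjoint sets in $\A_f$ and $(B_j)$ is a finite measurable partition of $K$. Then $\int f\,dN=\sum_{i,j}c_{ij}N_{B_j}(A_i)$, and the independence of the $N_{B_j}$'s combined with the Poisson law of each gives
\[
\EE\!\left[e^{-\int f\,dN}\right]=\prod_j\exp\!\left(-\sum_i(1-e^{-c_{ij}})\rho(B_j)\mu(A_i)\right)=\exp\!\left(-\int(1-e^{-f})\,d(\mu\otimes\rho)\right),
\]
which matches the Laplace functional of the Poisson process with intensity $\mu\otimes\rho$. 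A standard monotone approximation argument then extends this equality to a full identification of the distribution of $N$. The key step — and really the main obstacle — is the ergodicity argument in the first paragraph, which allows Theorem~\ref{thm:poissonSplitting} to be applied to each slicing; everything else is routine.
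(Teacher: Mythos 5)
Your proof is correct and follows essentially the same route as the paper's: slice $N$ along a finite measurable partition of $K$, apply Theorem~\ref{thm:poissonSplitting} to the resulting ergodic splitting of the Poisson process $N_0$, and identify the law of $N$ on product sets together with the intensity $\mu\otimes\rho$. The only (inessential) difference is that you conclude via the Laplace functional, whereas the paper directly checks that the variables $N(A_i\times B_i)$ are independent Poisson for pairwise disjoint product sets; both are standard ways to pin down the Poisson law from a generating semiring.
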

\begin{remark}
It is well known that when a Poisson point process is independently
endowed with i.i.d. marks, then the resulting point process on the product
space is a Poisson process whose intensity is the product measure
of the original intensity and the distribution of the marks. 
The above
result means that, for $T$ with infinite ergodic index, the only way to get an ergodic marked $T$-point process out of a
Poisson $T$-point process is precisely to take i.i.d. marks, independent of the underlying
process.
\end{remark}
\begin{proof}
Let us denote by $\widetilde{\mu}$ the intensity of $N$ on $X\times K$.
Let $E=\bigsqcup_{i\in I} A_i\times B_i\subset X\times K$ be a finite union of pairwise disjoint product sets of finite
$\widetilde{\mu}$-measure. Let $(\tilde B_j)_{j\in J}$ be the finite partition of $K$ into nonempty subsets 
generated by the sets $B_i$. For each $j\in J$, denote by $I_j$ the subset of $i\in I$ such that $\tilde B_j \subset B_i$, then
observe that the sets $A_i$, $i\in I_j$ are pairwise disjoint. 
Therefore, $E$ can be also written as the disjoint union
\[
  E = \bigsqcup_{j\in J} \bigsqcup_{i\in I_j}  A_i\times \tilde B_j,
\]
and this  refines the original partition of $E$ into product sets $A_i\times B_i$.
Since the ergodic $T$-point processes $N_{\tilde B_{j}}:=N\left(\cdot\times \tilde B_{j}\right)$, $j\in J$,
form an ergodic splitting of $N_{0}$, we get by Theorem~\ref{thm:poissonSplitting} that they are
independent Poisson $T$-point processes. Since for each $j\in J$ the sets $A_i$, $i\in I_j$, are pairwise
disjoint, the random variables $N(A_i\times \tilde B_j)$, $j\in J$, $i\in I_j$ are independent Poisson  random variables.
Finally, the random variables $N(A_i\times  B_i)$, $i\in I$ are also independent Poisson random variables. 
This is enough to prove  that $N$ is a Poisson point process.

To get the intensity, observe that, for each $B\in\mathcal{K}$, $N\left(\cdot\times B\right)$
is a thinning of $N_{0}$ whose intensity is $\mu$. By ergodicity
of $\left(X,\mathcal{A},\mu,T\right)$, there exists $0\le\rho\left(B\right)\le1$
such that $\rho\left(B\right)\mu$ is the intensity of $N\left(\cdot\times B\right)$.
It is now clear that the map $B\mapsto\rho\left(B\right)$ defines
a probability measure on $\left(K,\mathcal{K}\right)$. Finally $\widetilde{\mu}=\mu\otimes\rho$.
%
%
%
%
%
\end{proof}

\section{Extended SuShis: From simple point processes to general random measures}

\label{sec:random_measures}
In this section, we aim to extend the ``rigidity" result obtained in~\cite{sushis} for
simple point processes to general random measures. The hypothesis
on $\left(X,\mathcal{A},\mu,T\right)$ will be the same as in~\cite{sushis} but
rephrased with the notation introduced earlier.

\begin{definition}
\label{def:haddock}
We say that the infinite measure preserving system $\left(X,\mathcal{A},\mu,T\right)$ has the \emph{(P) property}  if, 
for each $n\ge1$ the following is true:
whenever $\sigma$ is a boundedly finite, $T^{\times n}$-invariant measure on $X^{n}$, 
with marginals absolutely continuous with respect to $\mu$, then $\sigma$ is conservative, and its ergodic
components are all of the form $\left(T^{k_{1}}\times\cdots\times T^{k_{n}}\right)_{*}m_{\pi}$
for some $\pi\in\mathcal{P}_{n}$ and integers $k_{1},\dots,k_{n}$.
  \end{definition}
  
Note that the (P) property implies in particular that $T$ has infinite ergodic index 
(otherwise the ergodic components of some product measure $\mu^{\otimes n}$ would not satisfy the required assumption).

\medskip

Let $\widetilde{X}_{c}\subset\widetilde{X}$ and $\widetilde{X}_{d}\subset\widetilde{X}$
be respectively the spaces of continuous and discrete boundedly finite
measures on $\left(X,\mathcal{A}\right)$.

If $N$ is an integrable $T$-random measure, we can write $N$ as $N_{c}+N_{d}$ where $N_c\in\widetilde{X}_{c}$
and $N_d\in\widetilde{X}_{d}$ (both  $N_{c}$ and  $N_{d}$ can be obtained deterministically from $N$). Of course
$N_{c}$ and $N_{d}$ are still integrable $T$-random measures whose respective
intensities are multiples of $\mu$ (by ergodicity of $T$).
Thanks to this decomposition, we can  study separately the case of a.s. continuous $T$-random measures, 
and the case of a.s. discrete $T$-random measures, which will be the objects of the two following sections.

\subsection{The continuous case}
\begin{prop}
\label{prop:continuous}
Assume that $T$ has the (P) property. 

If $N$ is a square integrable $T$-random measure defined on some ergodic system $\left(\Omega,\F ,\mathbb{P},S\right)$,
whose realizations are a.s. continuous, then there exists $\alpha\ge0$ such that $N$ is constant and a.s. equal to $\alpha\mu$. 

If the realizations are a.s. absolutely continuous with respect to $\mu$, no hypothesis on moments are required to get the same conclusion, 
although we might have $\alpha=+\infty$. 
\end{prop}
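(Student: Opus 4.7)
The plan is to apply property~(P) to the second moment measure $M_2^N$, drastically restricting its form, and then combine this with the ergodicity of $(\Omega,\F,\PP,S)$ to conclude, handling the absolutely continuous case without moments by a truncation reduction. First, by ergodicity of $(X,\A,\mu,T)$ the intensity of $N$ is of the form $\alpha\mu$ for some $\alpha\ge 0$. By square integrability and Cauchy--Schwarz, $M_2^N$ is boundedly finite, and it is $T^{\times 2}$-invariant by construction; moreover, $\mu(A)=0$ forces $N(A)=0$ $\PP$-a.s., whence $M_2^N(A\times B)=0$ for any bounded $B$, so the marginals of $M_2^N$ are absolutely continuous with respect to $\mu$. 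Hypothesis~(P) therefore describes the ergodic decomposition of $M_2^N$: each component is of the form $(T^{k_1}\times T^{k_2})_*m_\pi$ for some $\pi\in\P_2$. Only two kinds of measures appear: the single component $\mu\otimes\mu$ (for $\pi=\{\{1\},\{2\}\}$, which is $T\times T$-invariant), and measures concentrated on the graph $G_k:=\{(y,T^ky):y\in X\}$ with $k:=k_2-k_1\in\ZZ$ (for $\pi=\{\{1,2\}\}$). Since $N(\omega)$ is continuous for $\PP$-a.e.\ $\omega$, a Fubini computation gives
\[
  N(\omega)\otimes N(\omega)(G_k)=\int N(\omega)(\{T^k x\})\,dN(\omega)(x)=0,
\]
whence $M_2^N(G_k)=0$ for every $k$. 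I then conclude $M_2^N=\beta\,\mu\otimes\mu$ for some $\beta\ge 0$.

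Next I would extract the value of $\beta$ from ergodicity. For any $A\in\A_f$ and any $k\in\ZZ$, the $T$-invariance of $\mu$ gives $\EE[N(A)N(T^{-k}A)]=M_2^N(A\times T^{-k}A)=\beta\mu(A)^2$, while $\EE[N(A)]=\alpha\mu(A)$. Hence for every $L\ge 1$ the Ces\`aro average $S_L:=\frac{1}{L}\sum_{k=1}^L N(A)\circ S^k=\frac{1}{L}\sum_{k=1}^L N(T^{-k}A)$ has variance exactly $(\beta-\alpha^2)\mu(A)^2$. But $N(A)\in L^2(\PP)$ and $S$ is ergodic, so by the mean ergodic theorem $S_L\to\alpha\mu(A)$ in $L^2(\PP)$, forcing $\Var(S_L)\to 0$. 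This gives $\beta=\alpha^2$, so $\Var(N(A))=0$ and $N(A)=\alpha\mu(A)$ $\PP$-a.s.\ for every $A\in\A_f$. By separability of $(\widetilde{X},\widetilde{\A})$ this upgrades to $N=\alpha\mu$ $\PP$-a.s., settling the square integrable case.

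For the absolutely continuous case without moment assumption, I would reduce to the previous step by truncation. Choose a jointly measurable Radon--Nikodym derivative $f:\Omega\times X\to[0,+\infty)$ with $N(\omega)=f(\omega,\cdot)\cdot\mu$; the equivariance of $N$ translates to $f(S\omega,y)=f(\omega,T^{-1}y)$ for a.e.\ $(\omega,y)$. For each $K>0$ the truncated density $f_K:=\min(f,K)$ inherits this equivariance, so $N_K:=f_K\cdot\mu$ is a $T$-random measure on the same ergodic base, dominated by $K\mu$ on bounded sets (hence square integrable) and $\mu$-absolutely continuous (hence continuous). The first part gives $N_K=\alpha_K\mu$ $\PP$-a.s.\ for some $\alpha_K\ge 0$, i.e.\ $f_K\equiv\alpha_K$ a.e.\ on $\Omega\times X$. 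Letting $K\to\infty$ and using $f_K\nearrow f$, the constants $\alpha_K$ increase to some $\alpha\in[0,+\infty]$, and $f\equiv\alpha$ a.e., yielding $N=\alpha\mu$ $\PP$-a.s.

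The crux of the argument is the first step: correctly exploiting property~(P) to eliminate every ergodic component of $M_2^N$ except $\mu\otimes\mu$. One has to check that (P) applies (bounded finiteness of $M_2^N$ and absolute continuity of its marginals), to enumerate the countable family of ergodic components produced by (P) in the case $n=2$, and to use continuity of $N$ to discard each graph component via the Fubini identity above. Once $M_2^N=\beta\,\mu\otimes\mu$ is established, the variance/ergodic-theorem argument and the truncation step are routine.
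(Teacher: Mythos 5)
Your proposal is correct and follows essentially the same route as the paper: apply property (P) to the boundedly finite, $T\times T$-invariant measure $M_2^N$, use continuity of the realizations to kill the graph components and get $M_2^N=\beta\,\mu\otimes\mu$, identify $\beta=\alpha^2$ via the ergodic theorem, and handle the absolutely continuous case by truncating the density. The only cosmetic difference is that you phrase the identification of $\beta$ through the variance of the Cesàro averages and the mean ergodic theorem, where the paper averages $\mathbb{E}[N(A)\,N(A)\circ S^k]$ directly.
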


\begin{proof}
First assume $N$ is square integrable. We know that its intensity is of the form $\alpha\mu$ for some $\alpha\ge0$, and 
the moment measure of order two, defined on $\A\otimes\A$ by
\[
M_{2}^N\left(A\times B\right):=\mathbb{E}\left[N\left(A\right)N\left(B\right)\right]=\mathbb{E}\left[N\otimes N\left(A\times B\right)\right]
\]
is boundedly finite. This measure is $T\times T$-invariant, and its marginals are absolutely continuous with respect to $\mu$. Moreover, 
as $N$ is a.s. continuous, $N\otimes N$ gives zero measure to the graphs of $T^{k}$, for all $k\in\mathbb{Z}$.
Thanks to property~(P),  $M_{2}^N=\beta\mu\otimes\mu$ for some $\beta\ge0$.
Applying the ergodic theorem, we get for all $A\in\A_f$
\[
\frac{1}{n}\sum_{k=1}^{n}\mathbb{E}\left[N\left(A\right)N\left(A\right)\circ S^{k}\right]\tend{n}{\infty}\mathbb{E}\left[N\left(A\right)\right]^{2}=\alpha^{2}\mu\left(A\right)^{2}.
\]
On the other hand,
\[
\frac{1}{n}\sum_{k=1}^{n}\mathbb{E}\left[N\left(A\right)N\left(A\right)\circ S^{k}\right]
=\frac{1}{n}\sum_{k=1}^{n}M_2^N(A\times T^{-k}A)
=\frac{1}{n}\sum_{k=1}^{n}\beta\mu\left(A\right)\mu\left(T^{-k}A\right),
\]
which by invariance of $\mu$ is equal to $\beta\mu\left(A\right)^{2}$.
Therefore $\beta=\alpha^{2}$ and 
\[
  \mathbb{E}\left[\Bigl(N\left(A\right)-\alpha\mu\left(A\right)\Bigr)^{2}\right]=M_{2}^N\left(A\times A\right)-\alpha^{2}\mu\left(A\right)^{2}=0,
\]
which implies the result.

If we assume that the realizations are a.s. absolutely continuous, then we can write
for all set $A\in\mathcal{A}$
\[
N\left(\omega\right)\left(A\right)=\int_{A}f\left(\omega,x\right)\mu\left(dx\right).
\]
We can therefore define $N_{n}$ by
\[
N_{n}\left(\omega\right)\left(A\right)=\int_{A}\left(f\wedge n\right)\left(\omega,x\right)\mu\left(dx\right).
\]
$N_n$ is still a continuous ergodic $T$-random measure but is now square integrable.
From the first part of the proof, $N_{n}=\alpha_{n}\mu$ a.s. Since
$N_{n}$ increases to $N$, we get $N=\alpha\mu$ a.s. where $\alpha$
is the increasing limit of $\alpha_{n}$.
\end{proof}

\subsection{The discrete case}
%
We  consider the set of sequences
\[
  \ell_1^+(\ZZ)\egdef\left\{  (a_k)_{k\in\ZZ}: \forall k\in\ZZ, a_k\ge 0 \text{ and } \sum_{k\in\ZZ} a_k <\infty  \right\}.
\]
The $\ell_1$ norm turns $\ell_1^+(\ZZ)$ into a complete separable metric space. 
The goal of this section is to obtain the following result:
\begin{theo}
\label{thm:discrete}
Assume $T$ has the (P) property. 
Let $N$ be a nonzero $T$-random measure with moments of all orders defined
on some ergodic system $\left(\Omega,\mathcal{F},\PP,S\right)$ and
whose realizations are almost-surely discrete. Then there exists a
probability distribution $\kappa$ on $\ell_1^+(\ZZ)$
and a positive number $c$ such that $N$ is distributed as
\[
A\mapsto\int_{X\times\ell_1^+(\ZZ)}\sum_{k\in\mathbb{Z}}a_{k}\ind{A}\left(T^{k}x\right)\mathcal{N}\left(dx,d\left\{ a_{k}\right\} _{k\in\mathbb{Z}}\right)
\]
where $\mathcal{N}$ is a Poisson point process on $X\times\ell_1^+(\ZZ)$
with intensity $c\mu\otimes \kappa$.
\end{theo}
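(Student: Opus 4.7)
I would follow the strategy of~\cite{sushis} for the simple-point-process version of this result, adapted to the weighted setting via property~(P) and the marked-Poisson characterization (Theorem~\ref{prop:markedPoisson}). Since $N$ has moments of all orders, its intensity is $c\mu$ for some $c>0$ (by ergodicity and nontriviality), and $M_n^N$ is a boundedly finite $T^{\times n}$-invariant measure on $X^n$ with marginals absolutely continuous with respect to $\mu$. Property~(P) therefore yields a decomposition
\[
M_n^N \;=\; \sum_{\pi\in\mathcal{P}_n}\;\sum_{(k_2,\dots,k_n)\in\ZZ^{n-1}} \lambda_{\pi,(k_2,\dots,k_n)}^{(n)}\, \bigl(\Id\times T^{k_2}\times\cdots\times T^{k_n}\bigr)_* m_\pi,
\]
where we have used $T^{\times n}$-invariance to normalize the first shift to $0$.

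For a Poisson process $\mathcal{N}$ on $X\times\ell_1^+(\ZZ)$ of intensity $c\mu\otimes\kappa$, the associated candidate random measure $\tilde N$ given by the formula in the statement has moment measures that can be computed via Campbell's formula: they take exactly the same shape as above, with coefficients that are products, over the blocks $P$ of $\pi$, of mixed moments $\int\prod_{i\in P}a_{k_i}\,d\kappa$ of the weight sequence under $\kappa$. Matching coefficients between $M_n^N$ and $M_n^{\tilde N}$ then pins down the joint moments of $(a_k)_{k\in\ZZ}$ under $\kappa$: the single-block partition $\pi=\{\{1,\dots,n\}\}$ identifies $\int a_0 a_{k_2}\cdots a_{k_n}\,d\kappa$ with $\lambda^{(n)}_{\pi,(k_2,\dots,k_n)}/c$, while the multi-block coefficients are consistent automatically thanks to the product structure that property~(P) imposes on the multi-block components of $M_n^N$.

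It then remains to construct $\kappa$ as a genuine probability measure on $\ell_1^+(\ZZ)$, verify shift-invariance (so that $\tilde N$ is a $T$-random measure), and upgrade the agreement of moment measures to equality in distribution. The construction uses Kolmogorov extension: the finite-dimensional marginals on $[0,\infty)^F$ for finite $F\subset\ZZ$ are defined by the identities above, their consistency follows from the compatibility of the coefficients $\lambda^{(n)}_{\pi,\mathbf{k}}$, and concentration on $\ell_1^+(\ZZ)$ is forced by the finiteness of the total cluster intensity $c\int\bigl(\sum_k a_k\bigr)\,d\kappa$. Shift-invariance of $\kappa$ is inherited from the $T^{\times n}$-invariance of the moment measures and the freedom in the choice of normalization $k_1=0$. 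Once $\kappa$ is in hand, equality of all moment measures yields equality in distribution via a moment-determinacy argument generalizing Lemma~3.1 of~\cite{sushis}. The principal obstacle is precisely this extraction of $\kappa$: verifying Kolmogorov consistency, $\ell_1^+$-concentration and shift-invariance from the finite-order moment data, and then carrying out the moment-determinacy step for weighted (rather than simple) discrete random measures.
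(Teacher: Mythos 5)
There is a genuine gap, and it sits exactly where you locate ``the principal obstacle'': the passage from moment data to the law of $N$. Your plan reduces the theorem to (a) reconstructing $\kappa$ from the joint moments $\int a_{k_1}\cdots a_{k_n}\,d\kappa$ read off the single-block coefficients of $M_n^N$, and (b) concluding $N\stackrel{d}{=}\tilde N$ from equality of all moment measures. Step (a) is a multidimensional Stieltjes moment problem: the coefficients $\lambda^{(n)}_{\pi,\mathbf{k}}$ hand you candidate moments, not finite-dimensional distributions, and neither existence nor uniqueness of a representing measure is guaranteed, so the Kolmogorov consistency you invoke cannot even be formulated before that problem is solved. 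Step (b) fails outright in the weighted setting: unlike the simple-point-process case (where Poisson laws are moment-determinate, which is what Lemma~3.1 of \cite{sushis} exploits), the law of $N(A)$ for a cluster measure is of compound-Poisson type with jump distribution governed by $\kappa$, and nothing in the hypotheses controls the growth of the moments of $\sum_k a_k$ under $\kappa$; Carleman-type criteria are unavailable and such laws need not be determined by their moments. No amount of care in matching coefficients can close this.

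The paper avoids moments entirely at this stage. It truncates: $N_\epsilon$ keeps only atoms of weight at least $\epsilon$; the map $\Phi$ sends $N_\epsilon$ to a simple point process on $X\times\ell_1^+(\ZZ)$ whose first-coordinate projection is a free $T$-point process with moments of all orders, hence Poisson by Theorem~3.2 of \cite{sushis}, hence $\Phi(N_\epsilon)$ is a marked Poisson process by Theorem~\ref{prop:markedPoisson}. This gives the cluster form for each $N_\epsilon$ and, crucially, shows each $N_\epsilon$ is infinitely divisible; weak convergence $N_\epsilon\to N$ transfers infinite divisibility to $N$, and the L\'evy measure of $N$ --- a structural invariant, not a moment --- is then identified via the (P) property as an $\infty$-quasifactor isomorphic to $\bigl(X\times\ell_1^+(\ZZ),c\mu\otimes\kappa\bigr)$ (Propositions~\ref{prop:idsquareintegrable} and~\ref{prop:cmu}, Corollary~\ref{cor:product}). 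In other words, $\kappa$ is produced by disintegrating the L\'evy measure, not by solving a moment problem. If you want to salvage your outline, you must inject the infinite-divisibility step somewhere; as written, the argument cannot be completed.
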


This result says that $N$ has a cluster form which can be obtained in the following way: start from a Poisson point process
of intensity $c\mu$, then replace independently each point $x$ output by this Poisson point process
with a random measure (the cluster) of the form
\[
\sum_{k\in\mathbb{Z}}a_{k}\delta_{T^{k}x}
\]
where $\left\{ a_{k}\right\} _{k\in\mathbb{Z}}$ is chosen according
to $\kappa$.

%

\medskip

In the following, replacing if necessary $\mu$ by the intensity of $N$ which is of the form $\alpha\mu$ for some $\alpha>0$ by ergodicity of $T$, we assume that the intensity of $N$ is $\mu$.

\subsubsection{Removing points with small weights}
\label{subsubsection:removing points}
Consider $N$ as in the statement of Theorem~\ref{thm:discrete}. 
For $\epsilon>0$, we define $N_{\epsilon}$ from $N$ by removing  points
with weights less than $\epsilon$. We also define $N_{\epsilon,1}$ as the simple
point process obtained from $N_{\epsilon}$ with all weights set to
$1$. We have for all $A\in\mathcal{A}_{f}$
\[
N_{\epsilon,1}\left(A\right)\le\frac{1}{\epsilon}N_{\epsilon}\left(A\right)\le\frac{1}{\epsilon}N\left(A\right),
\]
therefore $N_{\epsilon}$ and $N_{\epsilon,1}$ are both $T$-random
measures with moments of all orders.

In particular, thanks to Proposition~2.1 in~\cite{sushis}, 
$N_{\epsilon,1}$ almost surely belongs to the subset $\widetilde{X}_{d,f}\subset\widetilde{X}_{d}$
of measures $\nu$
satisfying the following property: 
\[ \forall x\in X,\ \#\left\{ n\in\mathbb{Z}:\ \nu(T^{n}x)>0 \right\} <\infty.
  \]
Of course, $N_\epsilon\in\widetilde{X}_{d,f}$ almost surely as well.

We  construct an injective map $\Phi$ from $\widetilde{X}_{d,f}$
to $\left(X\times\ell_1^+(\ZZ)\right)^{*}$. For $\nu\in\widetilde{X}_{d,f}$, $\Phi(\nu)$ is the simple counting measures supported 
on the following collection of points in $X\times\ell_1^+(\ZZ)$: we select in each orbit seen by $\nu$ the first element $x$ in the orbit with maximal weight, 
and consider the point $\left(x,\left(\nu(T^nx)\right)_{n\in\ZZ}\right)$. In other words,
a point $\left(x,\left( \beta_{n}\right) _{n\in\ZZ}\right)$ belongs
to $\Phi\left(\nu\right)$ if and only if
\begin{itemize}
\item $\beta_{0}>\beta_n$ for each $n<0$; 
\item $\beta_{0}\ge \beta_n$ for each $n\ge 0$; 
\item $\nu\left(\{T^nx\}\right)=\beta_n$ for each $n\in\ZZ$.
\end{itemize}
Observe that we can recover $\nu$ from $\Phi\left(\nu\right)$ by the formula
\begin{equation}
  \label{eq:Phi}
  \nu\left(A\right) =\int_{X\times\ell_1^+(\ZZ)}\sum_{k\in\mathbb{N}}\beta_{k}\ind{A}\left(T^{k}x\right)\Phi\left(\nu\right) \left(dx,d\left\{ \beta_{k}\right\} _{k\in\mathbb{N}}\right).
\end{equation}

Now, $\Phi\left(N_{\epsilon}\right)$ is $\mathbb{P}$-a.s. well defined 
as a $(T\times\Id)$-point process on $X\times\ell_1^+(\ZZ)$. 
Its projection on the first coordinate $\left(\Phi\left(N_{\epsilon}\right)\right)_{0}:=\Phi\left(N_{\epsilon}\right)\left(\cdot\times\ell_1^+(\ZZ)\right)$
is a $T$-point process on $X$ with moments of all orders (since $\Phi\left(N_{\epsilon}\right)\left(\cdot\times\ell_1^+(\ZZ)\right)\le N_{\epsilon,1}\left(\cdot\right)$).
By construction, this $T$-point process is free since we have selected one point in each orbit seen by $N_\epsilon$.

Applying Theorem~3.2 in~\cite{sushis} (and this is where we need the assumptions of moments of all orders), we get that $\left(\Phi\left(N_{\epsilon}\right)\right)_{0}$
is a Poisson process. Then by Theorem~\ref{prop:markedPoisson}, we obtain that $\Phi\left(N_{\epsilon}\right)$
is a Poisson process on $\left(X\times\ell_1^+(\ZZ)\right)$. 
Applying~\eqref{eq:Phi},  we almost surely have
\[
N_{\epsilon}\left(A\right)=\int_{X\times\mathbb{R}_{+}^{\mathbb{Z}}}\sum_{k\in\mathbb{N}}\alpha_{k}\ind{A}\left(T^{k}x\right)\Phi\left(N_{\epsilon}\right)\left(dx,d\left\{ \alpha_{k}\right\} _{k\in\mathbb{N}}\right),
\]
and thus we get the theorem for $N_\epsilon$, for any $\epsilon>0$. 

\medskip

To get the result for $N$, we need to take advantage of the infinite divisibility character of  $N_\epsilon$,
that $N$ will inherit in the limit. We therefore have to recall some general results about infinite divisibility.

\subsubsection{Infinitely divisible random measures}

The notion of infinite divisibility can be defined on any measurable semi-group although we only give it
in the context we are interested in.
\begin{definition}
let $\left(Z,\mathcal{Z}\right)$ be complete separable metric space.
A probability distribution $\sigma$ on $\left(\widetilde{Z},\widetilde{\mathcal{Z}}\right)$
is \emph{infinitely divisible} (ID) if , for any $k\in\mathbb{N}$,
there exists a probability distribution $\sigma_{k}$ such that
\[
\sigma=\underbrace{\sigma_{k}\star\cdots\star\sigma_{k}}_{k\text{ times}},
\]
where $\star$ is the convolution of measures induced by the addition
on $\widetilde{Z}$. By extension, we say that a random measure on
$Z$ is ID if its distribution is.
\end{definition}

The first examples of ID random measures are Poisson measures, their
ID character comes directly from the well known fact that the sum
of two independent Poisson processes on the same space with intensities
$\mu_{1}$ and $\mu_{2}$ is again a Poisson process, but with intensity
$\mu_{1}+\mu_{2}$.

We recall the fundamental representation result that can be found
in~\cite[Theorem~3.20]{Kallenberg2017}.

\begin{theo}
 \label{theo:Kallenberg}
 A probability measure $m$ on $\widetilde{X}$ is ID if and only if there exist
 a measure $\gamma\in\widetilde{X}$ and a $\sigma$-finite measure $\rho$ on
 $\widetilde{X}\setminus\{0\}$ satisfying, for all bounded $B\subset X$
 \[
   \int_{\widetilde{X}} \left(\xi (B)\wedge 1\right)\, d\rho(\xi) < \infty,
 \]
 such that $m$ is the distribution of the following random measure:
 \[
     \gamma +  \int_{\widetilde{X}} \xi(\cdot)\, d\omega(\xi),
 \]
 where $\omega$ is a random element of $\left(\widetilde{X}\right)^*$ 
 chosen according to the Poisson measure $\rho^*$ on $\widetilde{X}$ with intensity $\rho$.
 
 The measures $\gamma$ and $\rho$ are uniquely determined by $m$, and $\rho$ is called the 
Lévy measure of $m$.
 \end{theo}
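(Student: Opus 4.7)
The plan is to use the Laplace functional approach, which is the cleanest way to handle representations of this kind. For any measurable $f\colon X\to\RR_+$ that is bounded with bounded support, define the Laplace functional $L_m(f)\egdef \int_{\widetilde{X}} e^{-\xi(f)}\,dm(\xi)$. Since ID laws are characterized by their Laplace functionals, and since $L_m(f)$ is always positive, we may write $L_m(f)=\exp(-\psi(f))$ for some ``cumulant'' functional $\psi$. The theorem is then equivalent to proving that
\[
  \psi(f)=\int_X f\,d\gamma + \int_{\widetilde{X}\setminus\{0\}}\Bigl(1-e^{-\xi(f)}\Bigr)\,d\rho(\xi),
\]
with $\gamma$ and $\rho$ as described. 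The two directions (sufficiency and necessity) plus uniqueness are then handled separately.

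For sufficiency, I would start from $\gamma$ and $\rho$ satisfying the integrability condition and directly construct the random measure. Given a Poisson point process $\omega$ on $\widetilde{X}\setminus\{0\}$ with intensity $\rho$, the integrability $\int(\xi(B)\wedge 1)\,d\rho(\xi)<\infty$ combined with Campbell's formula ensures that $\sum_{\xi\in\omega}\xi(B)<\infty$ almost surely for bounded $B$, so $\eta\egdef \gamma+\int \xi(\cdot)\,d\omega(\xi)$ is well defined in $\widetilde{X}$. The Laplace functional of $\eta$ is computed by the standard Poisson exponential formula, giving exactly the claimed form of $\psi$; it is manifestly ID since $\psi$ is additive under superposition.

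The necessity direction is the main obstacle, and I would tackle it via accompanying Poisson laws. Write $m=m_k^{*k}$ with $m_k$ being the law of one of the $k$ equal ID pieces $\xi^{(k)}$. Define the finite signed measures
\[
  \rho_k \egdef k\,(m_k)|_{\widetilde{X}\setminus\{0\}},
\]
and consider the accompanying Poisson law $\pi_k\egdef\operatorname{Pois}(\rho_k)$. A classical comparison argument (bounding total variation between the $k$-fold convolution of $m_k$ and $\pi_k$ in terms of $k\,\|m_k-\delta_0\|_{TV}$ on events of the form $\{\xi(B)\ge 1\}$) shows that the laws $\pi_k$ and $m$ get close, once one truncates out the deterministic drift. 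The task is then to prove that a suitable truncated version of $\rho_k$ converges vaguely on $\widetilde{X}\setminus\{0\}$ to a limit $\rho$ satisfying the integrability condition. Tightness comes from the fact that $\EE[\xi(B)\wedge 1]$ is bounded uniformly in $k$ (by infinite divisibility applied to $\xi(B)$), and the Polish structure of $\widetilde{X}$ lets one pass to a limit. The deterministic part $\gamma$ is recovered by subtracting the contribution of the integral against $\rho$ from the mean (suitably truncated so as to avoid any divergence issue when $\rho$ is infinite near $0$).

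Uniqueness follows by identifying $\gamma$ and $\rho$ from the Laplace functional. Plugging test functions $tf$ with $t\downarrow 0$ isolates $\gamma(f)$ (the linear term), while variation of $f$ at fixed scale isolates the integrals $\int(1-e^{-\xi(f)})\,d\rho(\xi)$, and these functionals determine $\rho$ on $\widetilde{X}\setminus\{0\}$ by the Stone--Weierstrass-type density of the span of $\{\xi\mapsto 1-e^{-\xi(f)}\}$ in continuous bounded functions vanishing near $0$. The hard step throughout is the tightness and identification of the Lévy measure in the necessity argument; everything else reduces to manipulations of Laplace functionals and standard Poisson calculus.
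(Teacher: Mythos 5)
The paper offers no proof of this statement at all: it is imported verbatim as \cite[Theorem~3.20]{Kallenberg2017} and used as a black box, so there is nothing internal to compare your argument against. Judged on its own terms, your sketch follows the standard route (Laplace functionals, accompanying Poisson laws); the sufficiency direction and the Poisson exponential-formula computation are fine, but two steps of the harder parts need repair.

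In the necessity step, the total-variation comparison between $m_k^{*k}$ and $\operatorname{Pois}(\rho_k)$ via a Le Cam--type bound needs $k\,m_k(\widetilde{X}\setminus\{0\})$ to stay bounded, and it need not: for an ID random measure with $N(B)>0$ a.s.\ (a gamma random measure, or anything with $\gamma\neq 0$) every factor $m_k$ gives zero mass to $\{0\}$, so $\rho_k(\widetilde{X}\setminus\{0\})=k\to\infty$ and the accompanying compound Poisson law is not close to $m$ in total variation. The argument that does work is the one your opening paragraph already sets up: $-\log L_m(f)=\lim_k k\bigl(1-L_{m_k}(f)\bigr)=\lim_k\int_{\widetilde{X}}\bigl(1-e^{-\xi(f)}\bigr)\,d\rho_k(\xi)$ with $\rho_k=k\,m_k$, combined with your uniform bound $\int(\xi(B)\wedge 1)\,d\rho_k\le C_B$ (which is correct: it follows from $1-e^{-t}\ge(1-e^{-1})(t\wedge 1)$ and $L_m(\ind{B})>0$). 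One extracts a vague limit $\rho$ of $\rho_k$ on the sets $\{\xi:\xi(B)>\epsilon\}$, and the mass of $\rho_k$ escaping to $0$ contributes, after the linearization $1-e^{-\xi(f)}\sim\xi(f)$, exactly the drift $\gamma(f)$ (positivity and linearity of the limiting functional then give a measure $\gamma$). So the total-variation detour should be dropped; it is the specific estimate, not the strategy, that fails.

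In the uniqueness step the scaling limit goes the wrong way. With $\psi(tf)=t\gamma(f)+\int\bigl(1-e^{-t\xi(f)}\bigr)\,d\rho$, letting $t\downarrow 0$ gives $\psi(tf)/t\to\gamma(f)+\int\xi(f)\,d\rho$, i.e.\ the (possibly infinite) mean, not $\gamma(f)$. The drift is isolated by $t\to\infty$: the integral term is $o(t)$ because $\rho(\xi(B)\ge 1)<\infty$ and $\int_{\{\xi(f)<1\}}\xi(f)\,d\rho<\infty$, so dominated convergence applies. With that corrected, recovering $\rho$ from the functionals $f\mapsto\int(1-e^{-\xi(f)})\,d\rho$ is the standard argument you describe.
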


Here we summarize useful properties about the case we are interested in.
\begin{prop}
\label{prop:idsquareintegrable}
Let $(X,\mathcal{A},\mu,T)$ be a conservative ergodic infinite measure preserving system, and let $m$ be the distribution of an ergodic square integrable ID random measure with intensity $\mu$, and whose realizations are almost
surely discrete. Assume that $m$ corresponds to 
$\left(\gamma,\rho\right)$ as in Theorem~\ref{theo:Kallenberg}, then:
\begin{itemize}
\item $\gamma=0$,
\item $\rho$ is $T_{*}$-invariant and supported by $\widetilde{X}_{d}$, 
\item $\int_{\widetilde{X}}\xi\left(A\right)\rho\left(d\xi\right)=\mu\left(A\right)$
for all $A\in\mathcal{A}_{f}$,
\item $\int_{\widetilde{X}}\xi\left(A\right)^{2}\rho\left(d\xi\right)=\int_{\widetilde{X}}\left(\xi\left(A\right)-\mu\left(A\right)\right)^{2}m\left(d\xi\right)$
for all $A\in\mathcal{A}_{f}$,
\item $\left(\widetilde{X},\widetilde{\mathcal{A}},\rho,T_{*}\right)$ has
no $T_{*}$-invariant sets of finite, non zero measure.
\end{itemize}
\end{prop}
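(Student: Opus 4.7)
The plan is to extract all five items from Kallenberg's L\'evy-Khintchine representation (Theorem~\ref{theo:Kallenberg}), combining its uniqueness clause with the $T_*$-invariance, ergodicity, and a.s.\ discreteness of $N$, together with Campbell-type formulas for Poisson integrals over $\widetilde{X}$.

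First I would transfer the $T_*$-invariance of $m$ to $(\gamma,\rho)$. Applying $T_*$ to the canonical realization $N=\gamma+\int_{\widetilde{X}}\xi(\cdot)\,d\omega(\xi)$ produces an alternative L\'evy-Khintchine representation of the same $m$, with deterministic part $T_*\gamma$ and L\'evy measure $(T_*)_*\rho$; the uniqueness clause of Theorem~\ref{theo:Kallenberg} immediately gives $T_*\gamma=\gamma$ and $(T_*)_*\rho=\rho$. To force $\gamma=0$, I would decompose $\gamma=\gamma_c+\gamma_d$: the continuous part $\gamma_c$ is deterministic and appears in every realization of $N$, so it must vanish by a.s.\ discreteness of $N$. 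The intensity identity $\gamma(A)+\int\xi(A)\,d\rho(\xi)=\mu(A)$ yields $\gamma\le\mu$, hence $\gamma$ has a $T$-invariant Radon-Nikodym density with respect to $\mu$ bounded by $1$; ergodicity yields $\gamma=c\mu$ for some $c\in[0,1]$, and this must be zero since $\mu$ is continuous whereas $\gamma$ is discrete.

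Next I would argue that $\rho$ is carried by $\widetilde{X}_d$: if $\rho$ gave positive mass to the set of $\xi$ having a nonzero continuous part, such a $\xi$ would, with positive $\rho^{*}$-probability, appear as an atom of $\omega$ and contribute a continuous component to $N$, contradicting a.s.\ discreteness. Once $\gamma=0$ and $\rho$ is discrete-supported, the remaining identities are direct Campbell formulas for the Poisson integral of intensity $\rho$: the first-moment formula gives $\int\xi(A)\,d\rho(\xi)=\mathbb{E}[N(A)]=\mu(A)$, and the variance formula gives
\[
\int\xi(A)^{2}\,d\rho(\xi)=\Var[N(A)]=\int(\xi(A)-\mu(A))^{2}\,dm(\xi),
\]
using square integrability throughout.

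For the final item, I would argue by contradiction: suppose $B\in\widetilde{\A}$ is $T_*$-invariant with $0<\rho(B)<\infty$. The standard restriction/independence property of Poisson processes makes $\omega|_B$ and $\omega|_{B^c}$ independent Poisson processes with $T_*$-invariant intensities $\rho|_B$ and $\rho|_{B^c}$, so $N_1:=\int\xi\,d\omega|_B$ and $N_2:=\int\xi\,d\omega|_{B^c}$ are independent, nontrivial ID $T$-random measures with $N=N_1+N_2$. The task is to turn this independent splitting into a nontrivial $T_*$-invariant observable of $N$, contradicting ergodicity: the natural candidate is the integer-valued, $T_{**}$-invariant function $\omega\mapsto\omega(B)$ on the Poisson suspension $((\widetilde{X})^{*},\rho^{*},T_{**})$, which one must push down to a function of $N$ itself. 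Carrying out this descent is the main obstacle; it relies on reconstructing $\omega$ from the discrete cluster structure of $N$ (ensured by the previous paragraphs) and is the ID-analogue of the classical equivalence, in Poisson-suspension theory, between ergodicity and the absence of invariant sets of finite positive measure in the base.
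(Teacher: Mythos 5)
Your treatment of the first four items is correct and essentially identical to the paper's: uniqueness in Theorem~\ref{theo:Kallenberg} transfers the $T_*$-invariance of $m$ to $\gamma$ and $\rho$; the intensity identity gives $\gamma\ll\mu$, hence $\gamma=c\mu$ by ergodicity, which is continuous and so must vanish by a.s.\ discreteness of $N$; the support claim for $\rho$ and the two moment identities are the Campbell/isometry formulas, exactly as in the paper.

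The genuine gap is in the last item. You correctly identify that the invariant observable $\omega\mapsto\omega(K)$ lives on the Poisson suspension $\bigl((\widetilde{X})^{*},\rho^{*},(T_*)_*\bigr)$ and not on the system generated by $N$, and that you would need to ``descend'' it to a function of $N$ to contradict ergodicity --- but that descent is precisely what cannot be done here. The factor map $\omega\mapsto\int_{\widetilde{X}}\xi(\cdot)\,\omega(d\xi)$ is not injective in general: distinct clusters $\xi$ charged by $\omega$ can have overlapping supports, and at this stage no structure theorem for $\rho$ is available. Worse, the cluster structure you invoke to justify the reconstruction is exactly what Propositions~\ref{prop:simple}, \ref{prop:cmu} and Corollary~\ref{cor:product} later derive \emph{from} this proposition, so the argument would be circular. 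The paper sidesteps the descent entirely: since $K$ is $T_*$-invariant, the factor of the suspension generated by $\omega\mapsto\{(T_*)_*^{k}\omega(K)\}_{k}$ is an \emph{identity} factor, and by Furstenberg's disjointness of the identity from any ergodic system \cite{Fur67Disj} it is independent of the ergodic factor generated by $N$. Hence for every $A\in\mathcal{A}_f$ the covariance of $\int_{\widetilde{X}}\xi(A)\,\omega(d\xi)$ and $\omega(K)$ vanishes, and the isometry formula~\eqref{eq:isometry} identifies that covariance as $\int_{\widetilde{X}}\xi(A)\ind{K}(\xi)\,\rho(d\xi)$; letting $A\uparrow X$ and using $\xi(X)>0$ $\rho$-a.e.\ forces $\rho(K)=0$. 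You should replace your descent step with this disjointness-plus-isometry argument (or supply an independent proof that $\omega$ is $N$-measurable, which does not hold under the stated hypotheses).
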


\begin{proof}
If $m$ corresponds to $\left(\gamma,\rho\right)$, then $(T_*)_*(m)$ 
is ID too and corresponds to $\bigl(T_*(\gamma),\left(T_{*}\right)_*(\rho)\bigr)$.
Therefore if $m$ is $T_{*}$-invariant then $\gamma$ is $T$-invariant
and $\rho$ is $T_{*}$-invariant. 
For all $A\in\mathcal{A}_{f}$,
\begin{align*}
\mu\left(A\right) & =\int_{\widetilde{X}} \xi(A)\, m(d\xi) \\
 & =\gamma\left(A\right)+
 \int_{\widetilde{X}^*}
 \int_{\widetilde{X}} \xi(A) \, \omega(d\xi)\, \rho^*(d\omega),\\
 &= \gamma\left(A\right)+\int_{\widetilde{X}}\xi\left(A\right)\rho\left(d\xi\right).
\end{align*}
In particular, $\gamma\ll\mu$, and by ergodicity of $\mu$, $\gamma$ is some multiple
of $\mu$. But then $m$ could not be the distribution of a point process
as it would possess a continuous part, unless $\gamma$ vanishes.
From the Poisson construction, we also get that $\rho$ is supported on $\widetilde{X}_{d}$.

Next, the second formula is an application of the isometry formula (\ref{eq:isometry}).


The last fact should come as no surprise for anyone familiar with
ergodic properties of ID systems, however there is no available proof
for this particular case. We give one here:

Assume $\left(\widetilde{X},\widetilde{\mathcal{A}},m,T_{*}\right)$
is ergodic and $\left(\widetilde{X},\widetilde{\mathcal{A}},\rho,T_{*}\right)$
possesses a $T_{*}$-invariant set $K$ with $\rho\left(K\right)<\infty$.
Let $\left(\left(\widetilde{X}\right)^{*},\left(\widetilde{\mathcal{A}}\right)^{*},\rho^{*},\left(T_{*}\right)_{*}\right)$
be the Poisson suspension over $\left(\widetilde{X},\widetilde{\mathcal{A}},\rho,T_{*}\right)$.
The map
\[
\omega\mapsto\int_{\widetilde{X}}\xi\left(\cdot\right) \,\omega\left(d\xi\right)
\]
is a factor map between the suspension and the ergodic system $\left(\widetilde{X},\widetilde{\mathcal{A}},m,T_{*}\right)$.
Moreover we have another factor in the suspension, generated by the stationary process 
\[ \omega\mapsto \left\{\left(T_{*}\right)_{*}^{k}(\omega)\left(K\right)\right\} _{k\in\mathbb{Z}}. 
\]
But as $K$ is $T_{*}$-invariant, $\left(T_{*}\right)_{*}^{k}(\omega)\left(K\right)=\omega(K)$
for all $k\in\mathbb{Z}$, therefore $\left(T_{*}\right)_{*}$ acts
as the identity on this factor. By disjointness between the identity
map and any ergodic map~\cite{Fur67Disj}, we obtain that these two factors are independent
inside the Poisson suspension. In particular, for any $A\in\mathcal{A}_f$,
$\omega\mapsto \int_{\widetilde{X}}\xi\left(A\right) \,\omega\left(d\xi\right)$
and $\omega\mapsto \omega(K)$ are independent. It follows that
\[
\int_{\left(\widetilde{X}\right)^{*}} 
	\left( \int_{\widetilde{X}}\xi\left(A\right) \omega(d\xi) -\mu(A) \right)
	 \Bigl(  \omega(K)-\rho(K)	\Bigr)
	 \rho^*(d\omega)
	     = 0.
%
\]
By the isometry formula (\ref{eq:isometry}), we get:
\[
\int_{\widetilde{X}}\xi\left(A\right)\ind{K}\left(\xi\right)\,\rho\left(d\xi\right)=0
\]
By monotone convergence, we can replace $A$ by $X$ in the preceding integral
and, as $\xi\left(X\right)>0$ $\rho$-a.e., we get that $\ind{K}=0$
$\rho$-a.e., \textit{i.e.} $\rho(K)=0$.
\end{proof}

%

Now, let us explain how $N$ inherits the ID character of $N_\epsilon$ in the limit.
We have $N_\epsilon\ll N$, thus we can introduce the random density $g_\epsilon\egdef \frac{dN_\epsilon}{dN}$. Almost surely, $g_\epsilon$ increases to $1$ as $\epsilon\to 0$.
Let $f$ be a bounded continuous function on $X$ with bounded support. Then by the dominated convergence theorem,
\[
  \int_X f\, dN_\epsilon = \int_X f g_\epsilon \, dN \tend{\epsilon}{0} \int_X f\, dN \quad\text{a.s.}
\]
Let also $h$ be a bounded continuous function on $\RR$. Again by the dominated convergence theorem, we get
\[
  \EE\left[  h\left( \int_X f\, dN_\epsilon \right)  \right] \tend{\epsilon}{0}\EE\left[  h\left( \int_X f\, dN\right)  \right].
\]
By~\cite[Theorem~4.11]{Kallenberg2017}, this characterizes the weak convergence of the distribution of $N_\epsilon$ to the 
distribution of $N$. Using~\cite[Lemma~4.24]{Kallenberg2017}, we can conclude that $N$ is ID as a limit of ID random measures. 

$N$ is thus square integrable and ID, hence admits a Lévy measure $\rho$ which is $\sigma$-finite measure on $\widetilde{X}_d$, and is $T_*$-invariant.  
By ergodicity of $N$,  the system $\left(\widetilde{X},\widetilde{\mathcal{A}},\rho,T_*\right)$ 
enjoys all the properties stated in Proposition~\ref{prop:idsquareintegrable}.

When $T$ has the (P) property, this system has a very simple structure that we fully describe below.

\subsubsection{Infinite quasifactors}

\label{sec:infinite_quasifactors}

We start by extending the notion of quasifactor of Glasner and Meyerovitch to the case of an infinite measure on $\widetilde{X}$.

\begin{definition}
  An \emph{$\infty$-quasifactor} of $(X,\A,\mu,T)$ is a dynamical system $\left(\widetilde{X},\widetilde{\mathcal{A}},\rho,T_{*}\right)$
  where $\rho$ is an infinite, $\sigma$-finite, $T_*$-invariant measure, and 
  \[
    \forall A\in\A_f,\ \int_{\widetilde{X}} \xi(A)\,\rho(d\xi) = \mu(A).
  \]
  The $\infty$-quasifactor $\left(\widetilde{X},\widetilde{\mathcal{A}},\rho,T_{*}\right)$ is said to be \emph{square integrable} if 
  \[
    \forall A\in\A_f,\ \int_{\widetilde{X}} \left(\xi(A)\right)^2\,\rho(d\xi) < \infty.
  \]
\end{definition}

As a simple example of $\infty$-quasifactor, we can consider the infinite measure preserving system $(X^*,\rho,T_*)$,
where $\rho$ is the pushforward of $\mu$ by $x\mapsto\delta_x$. It is actually the Lévy measure of the Poisson point process of intensity $\mu$.
More generally, from Proposition~\ref{prop:idsquareintegrable}, any ergodic infinitely divisible and square integrable $T$-random measure with intensity $\mu$
gives rise to a square integrable $\infty$-quasifactor through its Lévy measure.

\medskip

Let us first consider the case of a \emph{simple $\infty$-quasifactor}, \textit{i.e.} a measure $\rho$ concentrated on the 
set $X^*$ of simple counting measures on $X$.

\begin{prop}
\label{prop:simple}
If $T$ has the (P) property and $\left(X^{*},\mathcal{A}^{*},\rho,T_{*}\right)$
is a simple square integrable $\infty$-quasifactor without $T_{*}$-invariant
set of non-zero finite $\rho$-measure, then $\rho$-a.e. $\xi\in X^{*}$  is concentrated
on a finite subset of a single $T$-orbit. 
\end{prop}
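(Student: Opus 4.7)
The plan is to analyse the second moment measure of $\rho$, apply property~(P) to pin down its form, and then extract the structural conclusion via ergodicity of the associated Poisson suspension.

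First, I form the second moment measure $M_2^\rho$ on $(X^2,\A^{\otimes 2})$ via
\[
  M_2^\rho(A\times B) := \int_{X^*}\xi(A)\,\xi(B)\,d\rho(\xi),\qquad A,B\in\A_f.
\]
Square integrability of $\rho$ makes $M_2^\rho$ boundedly finite, $T_*$-invariance of $\rho$ makes it $T\times T$-invariant, and the first moment identity $\int\xi(A)\,d\rho=\mu(A)$ forces $\xi(A)=0$ $\rho$-a.e. whenever $\mu(A)=0$, so the marginals of $M_2^\rho$ are absolutely continuous with respect to $\mu$. Property~(P) then yields
\[
  M_2^\rho = c_\infty\,\mu\otimes\mu + \sum_{k\in\ZZ} c_k\,\gamma_k,\qquad c_\infty,c_k\ge 0,
\]
where $\gamma_k := (x\mapsto(x,T^kx))_*\mu$ is the graph measure of $T^k$ (coming from the one-atom partition of $\{1,2\}$ translated by $(0,k)$), and $\mu\otimes\mu$ is the unique ergodic component arising from the two-atom partition. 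Restricting to the diagonal $D_2$: simplicity of $\rho$-a.e.\ $\xi$ together with the first moment identity gives $M_2^\rho|_{D_2} = (x\mapsto(x,x))_*\mu = \gamma_0$, and only $\gamma_0$ charges $D_2$ among the listed components, so $c_0=1$.

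The heart of the argument is to show $c_\infty=0$. I work in the Poisson suspension $((X^*)^*,\rho^*,(T_*)_*)$, which is ergodic by the no-finite-invariant-set hypothesis. The ID $T$-random measure $N(\omega):=\int\xi\,d\omega(\xi)$ has intensity $\mu$ and, by the polarized form of the variance identity of Proposition~\ref{prop:idsquareintegrable},
\[
  \EE\bigl[\bigl(N(A)-\mu(A)\bigr)\bigl(N(B)-\mu(B)\bigr)\bigr] = M_2^\rho(A\times B).
\]
Applying the $L^2$ mean ergodic theorem to $N(B)-\mu(B)\in L^2(\rho^*)$ gives $\tfrac{1}{n}\sum_{k=0}^{n-1}\bigl(N(T^{-k}B)-\mu(B)\bigr)\to 0$ in $L^2(\rho^*)$; pairing with $N(A)-\mu(A)\in L^2$ yields
\[
  \frac{1}{n}\sum_{k=0}^{n-1} M_2^\rho(A\times T^{-k}B)\tend{n}{\infty} 0.
\]
Since every term in the decomposition of $M_2^\rho$ is a non-negative measure, $M_2^\rho(A\times T^{-k}B)\ge c_\infty\mu(A)\mu(T^{-k}B)=c_\infty\mu(A)\mu(B)$ for each $k$. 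Hence $c_\infty\mu(A)\mu(B)\le 0$ in the Cesàro limit, and choosing $A,B\in\A_f$ with $\mu(A),\mu(B)>0$ forces $c_\infty=0$.

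With $c_\infty=0$, $M_2^\rho=\sum_k c_k\gamma_k$ is supported on $\bigcup_{k\in\ZZ}\{(y,T^ky):y\in X\}$, so for $\rho$-a.e.\ $\xi$ every pair of distinct points lies on a common $T$-orbit, and $\xi$ is supported on a single orbit. For finite support, after discarding the $\mu$-null set of non-free orbits, write a single-orbit $\xi$ as $\sum_{k\in S}\delta_{T^k x}$ for a measurably chosen base $x$; by $T_*$-invariance and a standard disintegration along the $\ZZ$-action on orbits, the restriction of $\rho$ takes the form $\mu\otimes\nu$ for some $\sigma$-finite measure $\nu$ on subsets $S\subset\ZZ$. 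The first moment identity becomes $\mu(A)=\mu(A)\int|S|\,d\nu(S)$, giving $\int|S|\,d\nu=1$, and hence $|S|<\infty$ $\nu$-a.e., which is the desired finite-support conclusion. The remaining technical subtlety, and the principal obstacle after establishing $c_\infty=0$, is the measurable selection of a base point needed to make the disintegration $\rho=\mu\otimes\nu$ rigorous (in particular on the part of $\rho$ where $S$ lacks a minimum, which one excludes separately by the same first-moment argument).
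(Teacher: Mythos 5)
Your argument up to the single-orbit conclusion is correct and essentially the paper's: same second moment measure, same appeal to property~(P) to write $M_2^\rho=c_\infty\,\mu\otimes\mu+\sum_k c_k\gamma_k$, and the same Ces\`aro-averaging mechanism to kill $c_\infty$. The only difference there is cosmetic: you pass to the Poisson suspension over $\left(X^*,\rho,T_*\right)$ (ergodic by the no-finite-invariant-set hypothesis) and use the isometry formula, whereas the paper applies the mean ergodic theorem directly in $L^2(\rho)$, where the absence of invariant sets of positive finite measure already forces $\frac{1}{n}\sum_{\ell}\int_{X^*}\xi(A)\,T_*^{\ell}\xi(B)\,\rho(d\xi)\to 0$. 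Both routes are valid; the computation $c_0=1$ is correct but unused.

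The finiteness step, however, has a genuine gap, and it sits exactly where you flag it. Your disintegration $\rho=\mu\otimes\nu$ requires a measurable, $T_*$-equivariant choice of base point in the support of $\xi$; for a simple counting measure this means a distinguished element of the offset set $S\subset\ZZ$, e.g.\ $\min S$ or $\max S$. This exists, and your first-moment computation $\int|S|\,d\nu<\infty$ then kills infinite supports, only on the invariant part of $\rho$ where $S$ is bounded on at least one side. On the part where $S$ is unbounded in both directions there is no equivariant selection at all (this is the usual non-existence of a measurable cross-section for the orbit equivalence relation), so the identity $\mu(A)=\mu(A)\int|S|\,d\nu$ is never written down there, and ``the same first-moment argument'' has nothing to act on: $\int_{B}\xi(A)\,d\rho\le\mu(A)$ is perfectly consistent with every $\xi\in B$ being infinite. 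Ruling out this two-sided-unbounded part is the actual content of the finiteness claim, and it is not a formality. The paper handles it by invoking Proposition~2.1 of the SuShi paper, generalized to the infinite measure $\rho$ via Palm measures: since $\int_{X^*}\xi(A)\,\rho(d\xi)=\mu(A)<\infty$, the Palm measures of $\rho$ are genuine probability measures, and together with the second-moment hypothesis that proposition yields finiteness of the support along each orbit. You either need to import that result as the paper does, or supply an independent argument for the two-sided-unbounded set; as written, your proof does not close this case.
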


\begin{proof}
As $\left(X^{*},\mathcal{A}^{*},\rho,T_{*}\right)$ is square integrable,
the formula
\[
m\left(A\times B\right):=\int_{X^{*}}\xi\left(A\right)\xi\left(B\right)\rho\left(d\xi\right)
\]
 defines a boundedly finite $T\times T$-invariant measure on $X\times X$. Thanks to porperty~(P), it can be written as
\[
m=\alpha_{\infty}\mu\otimes\mu+\sum_{k\in\mathbb{Z}}\alpha_{k}\Delta_{k},
\]
where $\Delta_k$ is the measure supported on the graph of $T^k$ defined by $\Delta_k(A\times B):=\mu(A\cap T^{-k}B)$.
Observe that, as there is no $T_{*}$-invariant set of non-zero finite
$\rho$-measure,
\[
\frac{1}{n}\sum_{\ell=1}^{n}m\left(A\times T^{-\ell}B\right)
=\frac{1}{n}\sum_{\ell=1}^{n}\int_{X^{*}}\xi\left(A\right)T_{*}^{\ell}\xi\left(B\right)\rho\left(d\xi\right)\to0.
\]
However,
\[
\frac{1}{n}\sum_{\ell=1}^{n}\alpha_{\infty}\mu\otimes\mu\left(A\times T^{-\ell}B\right)=\alpha_{\infty}\mu\otimes\mu\left(A\times B\right)
\]
therefore $\alpha_{\infty}=0$.

This means that $m$ is concentrated on the graphs of the maps $T^k$, $k\in\ZZ$, therefore for $\rho$-a.e. $\xi\in X^{*}$ the product $\xi\otimes\xi$ is concentrated on
these graphs. It follows that $\rho$-a.e. $\xi\in X^{*}$ is concentrated on 
a single $T$-orbit. It remains to verify that $\rho$ is almost surely concentrated on a finite number of points in this orbit.
For this, we observe that Proposition~2.1 in~\cite{sushis} can be generalized to the case of an infinite measure $\rho$ on $X^*$. 
Indeed, the Palm measures of $\rho$ are probability
measures since $\int_{X^{*}}\xi\left(A\right)\rho\left(d\xi\right)=\mu\left(A\right)<+\infty$. 
Since $\rho$ has moments of order~2, the proposition yields that $\rho$-a.e. $\xi\in X^{*}$  is concentrated
on a finite subset of a single $T$-orbit.\end{proof}

\begin{prop}
  \label{prop:cmu}
If $T$ has the (P) property and if $\left(\widetilde{X}_d,\widetilde{\mathcal{A}},\rho,T_{*}\right)$
is a square integrable $\infty$-quasifactor without $T_{*}$-invariant set
of non-zero finite $\rho$-measure, then there exists $c>0$ and a
factor map $\varphi$ from $\left(\widetilde{X}_d,\widetilde{\mathcal{A}},\rho,T_{*}\right)$
to $\left(X,\mathcal{A},c\mu,T\right)$ and some $T_{*}$-invariant
maps $\xi\mapsto a_{k}\left(\xi\right)\ge0$, $k\in\ZZ$, such that
\[
\xi=\sum_{k\in\mathbb{Z}}a_{k}\left(\xi\right)\delta_{T^{k}\varphi\left(\xi\right)}.
\]
\end{prop}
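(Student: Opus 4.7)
The plan is to follow closely the strategy of Proposition~\ref{prop:simple}, extending it to general discrete $\xi$ and then extracting the required factor map.

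First I would apply property~(P) to the two-point moment measure $m$ on $X\times X$ defined by
\[
  m(A\times B) := \int_{\widetilde{X}_d} \xi(A)\,\xi(B)\,\rho(d\xi).
\]
Square integrability of the $\infty$-quasifactor gives that $m$ is boundedly finite, $T\times T$-invariance is inherited from $T_*$-invariance of $\rho$, and the marginals are easily seen to be multiples of $\mu$. Property~(P) then yields a decomposition $m = \alpha_\infty\,\mu\otimes\mu + \sum_{k\in\ZZ}\alpha_k\Delta_k$, and the Cesàro argument used verbatim as in the proof of Proposition~\ref{prop:simple} (using the absence of $T_*$-invariant sets of finite nonzero $\rho$-measure) forces $\alpha_\infty=0$.

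Second, I would deduce the crucial geometric consequence. The complement of $\bigcup_{k\in\ZZ}\mathrm{graph}(T^k)$ in $X\times X$ can be covered by countably many Borel sets $E_n$ with $m(E_n)=0$; for each such $E_n$, Fubini gives $(\xi\otimes\xi)(E_n)=0$ for $\rho$-a.e.~$\xi$, so $\mathrm{supp}(\xi)\times\mathrm{supp}(\xi) \subset \bigcup_k\mathrm{graph}(T^k)$ $\rho$-a.s. Since $\xi$ is discrete, this exactly means that any two atoms of $\xi$ lie in a common $T$-orbit; equivalently, $\mathrm{supp}(\xi)$ is contained in a single $T$-orbit $\mathcal{O}(\xi)$.

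Third, and this is the main obstacle, I have to build a measurable $T_*$-equivariant selection $\varphi:\widetilde{X}_d\to X$ with $\varphi(\xi)\in\mathcal{O}(\xi)$ a.s. The naïve choice of ``atom with maximal weight'' (tie-broken by position), used in the $\Phi$ map of Section~4.2.1 on $\widetilde{X}_{d,f}$, is the natural candidate but need not be well-defined here because the supremum of the weights may fail to be attained when infinitely many atoms of $\xi$ sit on the same orbit. I would handle this by a measurable selection: fix any measurable $\psi(\xi)\in\mathrm{supp}(\xi)$, write $\psi(T_*\xi)=T^{m(\xi)}\psi(\xi)$ for a $\ZZ$-valued measurable cocycle $m$, and look for an integer-valued $n$ solving the coboundary equation $n(T_*\xi)-n(\xi)=1-m(\xi)$, then set $\varphi:=T^n\circ\psi$. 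One can alternatively truncate atoms of small weight to reduce to the situation of Proposition~\ref{prop:simple}, obtain an equivariant selection $\varphi_\varepsilon$ on the subset where $\xi$ has an atom of weight $\ge\varepsilon$, and check that the $\varphi_\varepsilon$ agree on their common domain so that they glue into a single $\varphi$ defined $\rho$-a.e.

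Finally, once $\varphi$ is in hand, I would set $a_k(\xi):=\xi\bigl(\{T^k\varphi(\xi)\}\bigr)$. Nonnegativity and measurability are immediate, while $T_*$-invariance follows from equivariance of $\varphi$: $a_k(T_*\xi) = (T_*\xi)\bigl(\{T^{k+1}\varphi(\xi)\}\bigr) = \xi\bigl(\{T^k\varphi(\xi)\}\bigr) = a_k(\xi)$. The identity $\xi = \sum_{k\in\ZZ} a_k(\xi)\delta_{T^k\varphi(\xi)}$ then holds $\rho$-a.s.\ because $\mathrm{supp}(\xi)\subset\{T^k\varphi(\xi):k\in\ZZ\}$ by the second step. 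To identify $\varphi_*(\rho)$, note that it is $T$-invariant by equivariance of $\varphi$ and $T_*$-invariance of $\rho$, and is absolutely continuous with respect to $\mu$ since any $\mu$-null $A$ forces $\xi(A)=0$ a.s.\ hence $\{\varphi(\xi)\in A\}$ to have $\rho$-measure zero (after verifying $\sigma$-finiteness via the square integrability bound on $\rho$); by ergodicity of $(X,\A,\mu,T)$, $\varphi_*(\rho)=c\mu$ for some $c\in[0,\infty]$, and nontriviality of $\rho$ combined with the intensity condition $\int \xi(A)\rho(d\xi)=\mu(A)$ pins down a finite, positive $c$.
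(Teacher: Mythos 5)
Your proposal is essentially correct and, in its operative branch, coincides with the paper's proof: the paper also reduces to Proposition~\ref{prop:simple} by truncating the atoms of weight below $\varepsilon$, concludes that for each $\varepsilon>0$ only finitely many atoms of $\xi$ have weight $\ge\varepsilon$ and that they all lie on a single orbit, and then defines $\varphi(\xi)$ as the atom of maximal weight occupying the earliest position in that orbit; the identification $\varphi_*\rho=c\mu$ is carried out as you indicate (the paper sidesteps the $\sigma$-finiteness worry by first showing $\varphi_*\rho_0\le\mu$ where $d\rho_0/d\rho=a_0$, and then using $\rho_0\sim\rho$). Three caveats. First, your opening step --- applying property~(P) to the second moment measure of the full $\rho$ --- is correct but redundant: the truncation already yields the single-orbit property by letting $\varepsilon\to0$, and, more importantly, the single-orbit property alone is not enough; what makes the max-weight selection well defined is the finiteness of the set of atoms above any threshold, which only comes out of Proposition~\ref{prop:simple} applied to the truncations. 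Second, your first suggested fix, solving the coboundary equation $n(T_*\xi)-n(\xi)=1-m(\xi)$, is not viable as stated: a $\ZZ$-valued cocycle need not be cohomologous to the constant $1$, and the solvability of that equation is exactly equivalent to the existence of the equivariant selection you are trying to construct, so this route is circular and should be dropped. Third, for the gluing of the $\varphi_\varepsilon$ to work you must take $\varphi_\varepsilon(\xi)$ to be the earliest atom of maximal weight among those of weight $\ge\varepsilon$ (an arbitrary equivariant selection extracted from Proposition~\ref{prop:simple}, which only asserts finiteness and does not itself produce a selection, would not be consistent as $\varepsilon$ decreases); with that choice the $\varphi_\varepsilon$ stabilize and you recover exactly the paper's $\varphi$.
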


\begin{proof}
First observe that $\{0\}$ is a $T_*$-invariant set. It cannot have infinite $\rho$ measure because $\rho$ is 
$\sigma$-finite, hence $\rho(\{0\})=0$ from the hypotheses.

Let $\xi\in \widetilde{X}_d$ and set $\xi_{\mid \varepsilon}$, $\varepsilon>0$ to be $\xi$ where
we have forgotten points with weights less than $\varepsilon$ and set the other
weights to be $1$. Then $\xi\mapsto\xi_{\mid \varepsilon}$ is a factor map
and $\xi_{\mid \varepsilon}$ turns out to induce on $\widetilde{X}_d\setminus\left\{ \xi_{\mid \varepsilon}=\left\{ 0\right\} \right\} $
a simple square integrable $\infty$-quasifactor without $T_{*}$-invariant
set of non-zero finite measure. Therefore by Proposition~\ref{prop:simple}, $\xi_{\mid \varepsilon}$ has a
finite number of points on its support, which all lie on a single $T$-orbit.

It follows that all the points of the support of $\xi$ are $\rho$-a.s. on a single $T$-orbit, and only a finite number of them 
have a weight greater than any fixed positive constant. We can therefore see that the map $\varphi:\xi\mapsto\varphi\left(\xi\right)$
where $\varphi\left(\xi\right)$ is the point of the support with
the highest weight and the lowest place in the orbit is well defined.
It satisfies
\[
\varphi\left(T_{*}\xi\right)=T\varphi\left(\xi\right).
\]
Now with this ``origin'' $\varphi\left(\xi\right)$, we can define
maps $\xi\mapsto a_{k}\left(\xi\right)\ge0$ so that
\[
\xi:=\sum_{k\in\mathbb{Z}}a_{k}\left(\xi\right)\delta_{T^{k}\varphi\left(\xi\right)}.
\]
We have
\[
T_{*}\xi=\sum_{k\in\mathbb{Z}}a_{k}\left(\xi\right)\delta_{T^{k+1}\varphi\left(\xi\right)}=\sum_{k\in\mathbb{Z}}a_{k}\left(\xi\right)\delta_{T^{k}\varphi\left(T_{*}\xi\right)}
\]
in the one hand, and in the other hand
\[
T_{*}\xi:=\sum_{k\in\mathbb{Z}}a_{k}\left(T_{*}\xi\right)\delta_{T^{k}\varphi\left(T_{*}\xi\right)}.
\]
Therefore the maps $a_{k}$ are $T_{*}$-invariant.

We have for all $A$
\begin{align*}
  \mu(A) & =
\int_{\widetilde{X}_d}\xi\left(A\right)\rho\left(d\xi\right) \\
 & =  \sum_{k\in\mathbb{Z}}\int_{\widetilde{X}_d}a_{k}\left(\xi\right)\delta_{T^{k}\varphi\left(\xi\right)}\left(A\right)\rho\left(d\xi\right)\\
 & =  \sum_{k\in\mathbb{Z}}\int_{\widetilde{X}_d}a_{k}\left(T_{*}^{k}\xi\right)\delta_{\varphi\left(T_{*}^{k}\left(\xi\right)\right)}\left(A\right)\rho\left(d\xi\right)
	\quad\text{by $T_*$-invariance of $a_k$}\\
 & =  \sum_{k\in\mathbb{Z}}\int_{\widetilde{X}_d}\delta_{\varphi\left(\xi\right)}\left(A\right)a_{k}\left(\xi\right)\rho\left(d\xi\right)
	\quad\text{by $T_*$-invariance of $\rho$.}\\
\end{align*}
Let us define for each $k\in\ZZ$ the measure $\rho_k$ by $\frac{d\rho_{k}}{d\rho}\egdef a_{k}$. Then we get 
\[
\mu(A)=\sum_{k\in\mathbb{Z}}\varphi_{*}\rho_{k}\left(A\right),
\]
and in particular $\varphi_{*}\rho_{0}\ll\mu$. But, as $a_{0}>0$ $\rho$-a.e.,
$\rho_{0}\sim\rho$ and we also have $\varphi_{*}\rho\ll\mu$. By ergodicity $\varphi_{*}\rho=c\mu$
for some $c>0$.

Therefore $\varphi$ induces a factor map between $\left(\widetilde{X}_d,\widetilde{\mathcal{A}},\rho,T_{*}\right)$
and $\left(X,\mathcal{A},c\mu,T\right)$.
\end{proof}

\begin{corollary}
\label{cor:product}
Assume that $T$ has the (P) property. Let $\left(\widetilde{X}_d,\widetilde{\mathcal{A}},\rho,T_{*}\right)$ be an infinite
measure-preserving square integrable $\infty$-quasifactor without $T_{*}$-invariant
set of non-zero finite $\rho$-measure. Then there exists a probability measure $\kappa$ on $\mathbb{R}_{+}^{\mathbb{Z}}$ such 
that $\left(\widetilde{X}_d,\widetilde{\mathcal{A}},\rho,T_{*}\right)$ is isomorphic to 
$\left(X\times\mathbb{R}_{+}^{\mathbb{Z}},\mathcal{A}\otimes\mathcal{B}^{\otimes\mathbb{Z}},\mu\otimes(c\kappa),T\times\Id\right)$ (where $c$ is given in Proposition~\ref{prop:cmu}).

Moreover, we have
\[
  \frac{1}{c} = \int_{\mathbb{R}_{+}^{\mathbb{Z}}}  \left(\sum_{k\in\ZZ} a_k\right)\, \kappa \left(d\{a_k\}_{k\in\ZZ}\right).
\]
In particular, $\{a_k\}_{k\in\ZZ}\in\ell_1^+(\ZZ)$ $\kappa$-a.s.
\end{corollary}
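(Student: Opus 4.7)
The plan is to realize the isomorphism through the map $\Psi\colon\widetilde{X}_d\to X\times\RR_+^\ZZ$ defined by $\Psi(\xi)\egdef\bigl(\varphi(\xi),(a_k(\xi))_{k\in\ZZ}\bigr)$, where $\varphi$ and the $a_k$ are furnished by Proposition~\ref{prop:cmu}. The representation $\xi=\sum_{k\in\ZZ}a_k(\xi)\delta_{T^k\varphi(\xi)}$ shows at once that $\Psi$ is injective with a measurable inverse, since $\xi$ is completely determined by $\varphi(\xi)$ and the sequence $(a_k(\xi))_{k\in\ZZ}$. The identities $\varphi\circ T_*=T\circ\varphi$ and $a_k\circ T_*=a_k$ from Proposition~\ref{prop:cmu} give the equivariance $\Psi\circ T_*=(T\times\Id)\circ\Psi$, so it only remains to compute $\Psi_*\rho$.

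Since $\varphi_*\rho=c\mu$ is $\sigma$-finite and $\RR_+^\ZZ$ is Polish, I disintegrate $\Psi_*\rho$ over its first marginal: there is a measurable family of probability measures $(\nu_x)_{x\in X}$ on $\RR_+^\ZZ$ such that $\Psi_*\rho=\int_X(\delta_x\otimes\nu_x)\,c\mu(dx)$. The fibres are genuine probabilities precisely because the first marginal equals $c\mu$. The $(T\times\Id)$-invariance of $\Psi_*\rho$ combined with the $T$-invariance of $c\mu$ yields $\nu_{Tx}=\nu_x$ for $c\mu$-almost every $x$ (one checks this on a countable family of sets generating the Borel $\sigma$-algebra of $\RR_+^\ZZ$, then intersects the corresponding null sets). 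Since property~(P) implies infinite ergodic index, and in particular ergodicity of $(X,\A,\mu,T)$, the map $x\mapsto\nu_x$ is $c\mu$-a.e.\ constant, equal to some probability $\kappa$ on $\RR_+^\ZZ$. Thus $\Psi_*\rho=c\mu\otimes\kappa=\mu\otimes(c\kappa)$, which is the claimed isomorphism.

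For the integral identity, I start from the intensity condition $\mu(A)=\int\xi(A)\rho(d\xi)$, valid for every $A\in\A_f$. Transferring through $\Psi$, applying Fubini together with monotone convergence to exchange the sum with the integrals, and using $T$-invariance of $\mu$ to replace each $\int\ind{A}(T^kx)\mu(dx)$ by $\mu(A)$, I obtain
\[
\mu(A)=\int_{X\times\RR_+^\ZZ}\sum_{k\in\ZZ}a_k\,\ind{A}(T^k x)\,(c\mu\otimes\kappa)(dx,d\{a_k\})=c\mu(A)\int_{\RR_+^\ZZ}\sum_{k\in\ZZ}a_k\,\kappa(d\{a_k\}).
\]
Dividing by $c\mu(A)$ gives $1/c=\int\sum_k a_k\,d\kappa$. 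The finiteness of this integral then forces $\sum_k a_k<\infty$ for $\kappa$-almost every sequence, so $\kappa$ is concentrated on $\ell_1^+(\ZZ)$.

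The delicate point is the disintegration in the infinite-measure setting: one must ensure that the fibres are true probabilities (guaranteed by the first marginal being exactly $c\mu$) and that the ergodic argument upgrading $c\mu$-a.e.\ $T$-invariance of $\nu_x$ to a constant value goes through. Everything else is bookkeeping on the explicit formulas provided by Proposition~\ref{prop:cmu}.
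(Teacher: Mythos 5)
Your proof is correct and follows essentially the same route as the paper's: the map $\xi\mapsto(\varphi(\xi),(a_k(\xi))_{k\in\ZZ})$, disintegration of the pushforward over its $\sigma$-finite first marginal $c\mu$, ergodicity of $T$ to make the fibre measures constant, and the intensity identity $\mu(A)=\int\xi(A)\,\rho(d\xi)$ to evaluate $\int\sum_k a_k\,d\kappa=1/c$. The "delicate point" you flag is handled in the paper by exactly the observation you make, namely that the first marginal being $\sigma$-finite legitimizes the disintegration into probability fibres.
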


\begin{proof}
Define $\Phi$ from $\widetilde{X}_d$ to $X\times\mathbb{R}_{+}^{\mathbb{Z}}$
by
\[
\Phi\left(\xi\right):=\left(\varphi\left(\xi\right),\left(a_{k}\left(\xi\right)\right)_{k\in\mathbb{Z}}\right).
\]
Then $T\times\Id$ preserves $m\egdef\Phi_{*}\rho$ and $\Phi$
is an isomorphism between $\left(\widetilde{X}_d,\widetilde{\mathcal{A}},\rho,T_{*}\right)$
and $\left(X\times\mathbb{R}_{+}^{\mathbb{Z}},\mathcal{A}\otimes\mathcal{B}^{\otimes\mathbb{Z}},m,T\times\Id\right)$.
Since the $\sigma$-algebra generated by $\varphi(\xi)$ is $\sigma$-finite by the preceding proposition, we can disintegrate $m$ with respect to the first coordinate: 
we get a family $(\kappa_x)_{x\in X}$ of probability measures on $\mathbb{R}_{+}^{\mathbb{Z}}$, such that
\[ m(A\times B) = \int_A \kappa_x(B)  c\, d\mu(x),  
\]
where $c$ is given in the preceding proposition. By invariance of $m$ under $T\times\Id$, 
we get $\kappa_x=\kappa_{Tx}$ for $\mu$-almost every $x$, and by ergodicity of $T$ we conclude that there exists $\kappa$ such that $\kappa_x=\kappa$ 
 $\mu$-almost everywhere. This yields $m=\mu\otimes(c\kappa)$.
 
 Now, for each $A\in\mathcal{A}_f$, we have
 \begin{align*}
   \mu(A) &= \int_{\widetilde{X}_d} \xi(A)\,\rho(d\xi) \\
   &= c \int_{\mathbb{R}_{+}^{\mathbb{Z}}} 
   \left( \int_X \sum_{k\in\ZZ} a_k \ind{A}(T^k x) \, \mu(dx) \right) \kappa \left(d\{a_k\}_{k\in\ZZ}\right) \\
   & =  c \, \mu(A) \int_{\mathbb{R}_{+}^{\mathbb{Z}}} \left(\sum_{k\in\ZZ} a_k\right) \, \kappa \left(d\{a_k\}_{k\in\ZZ}\right).
 \end{align*}

 \end{proof}

\subsubsection{End of the proof of Theorem~\ref{thm:discrete}}
We come back to the end of the proof of the main theorem of this section. Recall that under the assumptions of this theorem, 
we were left with the following situation: $N$ is a $T$-random measure with moments of all orders, we showed it is infinitely divisible.
Hence the conclusion follows from the next proposition. 

\begin{prop}\label{squareID}
Assume that $T$ has the (P) property. Let $N$ be a square integrable ID $T$-random measure defined on
some ergodic system $\left(\Omega,\mathcal{F},\PP,S\right)$ whose realizations
are almost surely discrete. Then there exists a probability distribution
$\kappa$ on $\left(\ell_1^+(\ZZ),\mathcal{B}^{\otimes\mathbb{Z}}\right)$
and $c>0$ such that $N$ is distributed as
\[
A\mapsto\int_{X\times \ell_1^+(\ZZ) }\sum_{k\in\mathbb{Z}}a_{k}\ind{A}\left(T^{k}x\right)\mathcal{N}\left(dx,d\left\{ a_{k}\right\} _{k\in\mathbb{Z}}\right)
\]
where $\mathcal{N}$ is a Poisson point process on $X\times\ell_1^+(\ZZ)$
with intensity $c\mu\otimes\kappa$.
\end{prop}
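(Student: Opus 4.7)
My plan is to deduce this proposition almost mechanically from the results already assembled in Section~\ref{sec:random_measures}. The first step is to invoke Theorem~\ref{theo:Kallenberg} to represent the distribution $m$ of $N$ by a pair $(\gamma,\rho)$ consisting of a deterministic component $\gamma\in\widetilde{X}$ and a Lévy measure $\rho$ on $\widetilde{X}\setminus\{0\}$. Since $(\widetilde{X},\widetilde{\mathcal{A}},m,T_*)$ is ergodic (being a factor of the ergodic system $(\Omega,\mathcal{F},\PP,S)$), square integrable, has intensity $\mu$, and concentrated on $\widetilde{X}_d$, Proposition~\ref{prop:idsquareintegrable} applies. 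This gives $\gamma=0$, and tells us that $\rho$ is a $T_*$-invariant, $\sigma$-finite measure on $\widetilde{X}_d$ satisfying $\int \xi(A)\,\rho(d\xi)=\mu(A)$ and $\int \xi(A)^2\,\rho(d\xi)<\infty$ for every $A\in\mathcal{A}_f$, with no $T_*$-invariant set of non-zero finite $\rho$-measure.

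Thus $(\widetilde{X}_d,\widetilde{\mathcal{A}},\rho,T_*)$ is precisely a square integrable $\infty$-quasifactor to which Corollary~\ref{cor:product} applies. This produces a constant $c>0$, a probability measure $\kappa$ on $\ell_1^+(\ZZ)$, and an isomorphism
\[
  \Phi: (\widetilde{X}_d,\widetilde{\mathcal{A}},\rho,T_*)\longrightarrow \bigl(X\times \ell_1^+(\ZZ), \A\otimes\mathcal{B}^{\otimes\ZZ}, \mu\otimes(c\kappa), T\times\Id\bigr),
\]
of the form $\Phi(\xi)=\bigl(\varphi(\xi),(a_k(\xi))_{k\in\ZZ}\bigr)$, such that $\rho$-almost every $\xi$ can be written $\xi=\sum_{k\in\ZZ}a_k(\xi)\delta_{T^k\varphi(\xi)}$. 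In particular, for every $A\in\mathcal{A}$,
\[
  \xi(A)=\sum_{k\in\ZZ}a_k(\xi)\ind{A}\bigl(T^k\varphi(\xi)\bigr).
\]

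Finally I push the Poisson construction of Theorem~\ref{theo:Kallenberg} through the isomorphism $\Phi$. The Poisson point process $\omega$ on $\widetilde{X}$ with intensity $\rho$ becomes, under $\Phi$, a Poisson point process $\mathcal{N}:=\Phi_*\omega$ on $X\times\ell_1^+(\ZZ)$ with intensity $\Phi_*\rho=\mu\otimes(c\kappa)=c\mu\otimes\kappa$. Substituting into the representation
\[
  N(A)=\int_{\widetilde{X}}\xi(A)\,\omega(d\xi),
\]
and using the formula above for $\xi(A)$ in terms of $\Phi(\xi)$, we obtain
\[
  N(A)=\int_{X\times \ell_1^+(\ZZ)}\sum_{k\in\ZZ}a_k\,\ind{A}(T^kx)\,\mathcal{N}\bigl(dx,d\{a_k\}_{k\in\ZZ}\bigr),
\]
which is exactly the conclusion of the proposition. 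All the genuine work has been packaged into Proposition~\ref{prop:idsquareintegrable}, Proposition~\ref{prop:cmu} and Corollary~\ref{cor:product}, so no serious obstacle remains; the only thing to be careful about is ensuring that the Poisson pushforward is carried out on $\widetilde{X}_d$ (on which $\rho$ is concentrated by Proposition~\ref{prop:idsquareintegrable}) and not on the whole of $\widetilde{X}$, which is precisely where the isomorphism $\Phi$ is defined.
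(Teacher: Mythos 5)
Your proof is correct and follows exactly the route the paper takes: Kallenberg's representation plus Proposition~\ref{prop:idsquareintegrable} to identify the Lévy measure $\rho$ as a square integrable $\infty$-quasifactor on $\widetilde{X}_d$, then Corollary~\ref{cor:product} to transport the Poisson construction to $X\times\ell_1^+(\ZZ)$. You simply spell out the pushforward step that the paper leaves implicit, so there is nothing to add.
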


\begin{proof}
By Theorem~\ref{theo:Kallenberg} and Proposition~\ref{prop:idsquareintegrable}, the ID square integrable T-random measure $N$ 
can be described by its Lévy measure $\rho$. The latter is nothing else than a square integrable $\infty$-quasifactor, whose structure is completely
given in Corollary~\ref{cor:product}: the measure space $\left(\widetilde{X}_d,\rho\right)$ is isomorphic to $(X\times \ell_1^+(\ZZ),c\mu\otimes\kappa)$.
In this context, the representation of $N$ as an integral with respect to a Poisson random measure takes the more concrete form explicited in the statement 
of the proposition.  
\end{proof}

\begin{remark}
  Note that the assumption that $N$ has moments of all orders has only been used to obtain the ID character of the random measure. 
  Once this is established, square integrability of $N$ is sufficient to conclude. We do not know if square integrability alone implies the conclusion 
  of Theorem~\ref{thm:discrete}.
\end{remark}

\section{Improved disjointness results}

This last, short, section deals with joinings and disjointness.  The notion of joinings in Ergodic Theory is  the dynamical counterpart of couplings in Probability Theory. It is particularly relevant for the classification of dynamical systems as we do below.
For the reader unfamiliar with this notion, we refer to the seminal paper~\cite{Fur67Disj} and the book~\cite{Glas03Ergojoin}, that present modern ergodic theory through joinings and disjointness.

In~\cite{sushis}, we obtained a series of disjointness results for Poisson suspensions
over transformations satisfying the (P) property with the
additional assumption that the base transformation should have a measurable
law of large numbers, which is a very particular property. We were already convinced that this assumption
was not necessary. The results proved in the present paper allow  to get rid of it. 

The following proposition is an example of how the simplification occurs. 

\begin{prop}
Assume $T$ has the (P) property. If an ergodic probability preserving system $\left(\Omega,\mathcal{F},\PP,S\right)$
is not disjoint from $\left(X^{*},\mathcal{A}^{*},\mu^{*},T_{*}\right)$
then it possesses $\left(X^{*},\mathcal{A}^{*},\left(\alpha\mu\right)^{*},T_{*}\right)$
as a factor for some $\alpha>0$.
\end{prop}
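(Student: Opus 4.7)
The plan is to convert any non-trivial ergodic joining of the two systems into a Poisson $T$-point process living on $(\Omega,\F,\PP,S)$ itself, which is precisely a factor map onto some $(X^*,\A^*,(\alpha\mu)^*,T_*)$. The tool will be the classification of $T$-random measures with moments of all orders obtained in Section~\ref{sec:random_measures}.

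Since $(\Omega,\F,\PP,S)$ and $(X^*,\A^*,\mu^*,T_*)$ are assumed not to be disjoint, there exists an ergodic joining $\lambda$ on $\Omega\times X^*$ different from the product $\PP\otimes\mu^*$. On the ergodic system $(\Omega\times X^*,\lambda,S\times T_*)$, the coordinate projection $N:=\pi_2$ is then a Poisson $T$-point process of intensity $\mu$. I define
\[
\eta := \EE_{\lambda}\bigl[N\,\big|\,\pi_1^{-1}(\F)\bigr],
\]
a $T$-random measure on $(\Omega,\F,\PP,S)$ with intensity $\mu$. Since $N$ is Poisson it has moments of all orders, and the same follows for $\eta$ by conditional Jensen's inequality.

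Next I decompose $\eta=\eta_c+\eta_d$ into its continuous and discrete parts. By Proposition~\ref{prop:continuous}, the continuous part satisfies $\eta_c=\alpha_c\mu$ almost surely for some $\alpha_c\in[0,1]$. If $\eta_d\neq 0$, then Theorem~\ref{thm:discrete} represents $\eta_d$ as the cluster Poisson random measure associated to a Poisson $(T\times\Id)$-point process $\mathcal{N}$ on $X\times\ell_1^+(\ZZ)$ of intensity $c\mu\otimes\kappa$, for some $c>0$. Crucially, $\mathcal{N}$ is recovered from $\eta_d$ by the deterministic encoding $\Phi$ of Subsection~\ref{subsubsection:removing points} and is therefore $\F$-measurable; projecting it onto the first coordinate produces a Poisson $T$-point process on $(\Omega,\F,\PP,S)$ of intensity $c\mu$, which is precisely a factor map onto $(X^*,\A^*,(c\mu)^*,T_*)$, so that $\alpha=c$ works.

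The main obstacle is to rule out the case $\eta_d=0$, in which $\eta=\mu$ almost surely and the first conditional moment carries no information from the joining. To handle it I would iterate the argument on the higher conditional moment measures $\sigma_n:=\EE\bigl[N^{\otimes n}\,\big|\,\pi_1^{-1}(\F)\bigr]$, whose intensities decompose along the Poisson moment formula $\sum_{\pi\in\mathscr{P}_n}c_\pi m_\pi$. Restricting $\sigma_n$ to the support of each $m_\pi$ and using the natural identification of $m_\pi$ with $\mu^{\otimes\#\pi}$ yields a $T^{\times\#\pi}$-random measure on $\Omega$ of product intensity, to which extensions of Proposition~\ref{prop:continuous} and Theorem~\ref{thm:discrete} apply under property~(P). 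Either one of these pieces has a non-trivial discrete part, from which a Poisson factor of $(\Omega,\F,\PP,S)$ is extracted exactly as above, or every conditional moment of $N$ given $\F$ matches that of $\mu^*$; moment determinacy of the Poisson distribution then forces the conditional law to equal $\mu^*$ almost surely, i.e.\ $\lambda=\PP\otimes\mu^*$, contradicting the non-triviality of the joining.
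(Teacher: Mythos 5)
Your first two steps reproduce the paper's argument: the conditional intensity $\eta=\EE_\lambda\bigl[N\mid\pi_1^{-1}(\F)\bigr]$ is exactly the $T$-random measure $A\mapsto\Psi(N(A))$ that the paper builds from the Markov operator $\Psi$ of the joining (with the same moment bound via positivity of $\Psi$), and your treatment of the case $\eta_d\neq 0$ --- write $\eta=\alpha_c\mu+\eta_d$ using Proposition~\ref{prop:continuous}, then extract from the cluster representation of $\eta_d$ an $\F$-measurable Poisson $T$-point process via the encoding of Section~\ref{subsubsection:removing points} --- is precisely what the paper does when it writes $\Psi(N(\cdot))=c\mu+(1-c)M$ with $c<1$.

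The gap is in the degenerate case $\eta_d=0$, i.e.\ $\eta=\mu$ a.s. Your proposed iteration over the higher conditional moment measures $\sigma_n=\EE\bigl[N^{\otimes n}\mid\pi_1^{-1}(\F)\bigr]$ rests on ``extensions of Proposition~\ref{prop:continuous} and Theorem~\ref{thm:discrete}'' that are not in the paper and are not routine: those results concern $T$-random measures on $X$, whereas the pieces of $\sigma_n$ carried by the supports of the various $m_\pi$ are $T^{\times\#\pi}$-equivariant random measures on $X^{\#\pi}$; their second moment measures live on $X^{2\#\pi}$, where property~(P) produces a much richer family of ergodic components (translated $m_{\pi'}$ for partitions $\pi'$ of a $2\#\pi$-element set), and even granting a cluster representation of a nontrivial discrete piece there, it would yield a Poisson process over $X^{\#\pi}$, not the factor $\left(X^{*},\mathcal{A}^{*},\left(\alpha\mu\right)^{*},T_{*}\right)$ over $X$ that the statement demands --- so ``exactly as above'' does not apply. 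The dichotomy you assert (some piece has a discrete part, or all conditional moments are deterministic) is therefore unjustified as written. The paper closes this case by an entirely different, functional-analytic route: $\eta=\mu$ a.s.\ means $\Psi$ vanishes on the first chaos of $L^{2}(\mu^{*})$; decomposing $\Psi^{*}\Psi$ into indecomposable Markov operators and arguing as in Proposition~4.11 of~\cite{sushis} shows that $\Psi^{*}\Psi$ is the projection onto constants, hence the joining is the product, contradicting its non-triviality. You would need either to import that argument or to actually prove the multi-dimensional statements your sketch invokes.
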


\begin{proof}
  Consider a non-trivial joining $\lambda$  of $\left(\Omega,\mathcal{F},\PP,S\right)$ with the Poisson suspension 
  $\left(X^{*},\mathcal{A}^{*},\mu^{*},T_{*}\right)$, and denote by $\Psi:L^2(\mu^*)\to L^2(\PP)$ the associated Markov operator.
  Denote by $N$ the canonical Poisson $T$-point process defined on $\left(X^{*},\mathcal{A}^{*},\mu^{*},T_{*}\right)$.
  By positivity of $\Psi$, the map $A\in\mathcal{A}_f \mapsto \Psi(N(A))$ extends to a $T$-random measure on $\left(\Omega,\mathcal{F},\PP,S\right)$.
  Indeed, for $A\in\mathcal{A}_f$ we have
  \begin{align*}
    \Psi(N(T^{-1}(A))) & = \Psi (N(A)\circ T_*) \\
    & =  \Psi (U_{T_*}N(A)) \\
    & = U_S \Psi (N(A))\\
    & =  \Psi (N(A)) \circ S,
  \end{align*}
  and 
  $\EE_{\PP}\left[ \Psi(N(A)) \right] = \EE_{\mu^*} \left[N(A)\right] = \mu(A)$.

  Moreover, this $T$-random measure has moments of all orders, as for any $A\in\mathcal{A}_f$ and any $n\ge1$, 
  since $\Psi$ can be interpreted as a conditional expectation, we have
  \[ 
  \EE_{\PP}\left[ \left(\Psi(N(A)\right)^n \right] \le \EE_{\PP}\left[ \Psi\left(N(A)^n\right) \right] = \EE_{\mu^*}\left[N(A)^n\right] <\infty.   
  \]  
  From Proposition~\ref{prop:continuous},  there exists $0\le c\le1$ and a $T$-random measure
$M$ of intensity $\mu$ defined on $\left(\Omega,\mathcal{F},\mathbb{P}\right)$,
supported on discrete measures, such that
\[
\Psi\left(N\left(\cdot\right)\right)=c\mu+\left(1-c\right)M
\]
If $c=1$, then for all $A\in\mathcal{A}_{f}$, $\Psi\left(N\left(A\right)-\mu\left(A\right)\right)=0$,
which means that $\Psi$ vanishes on the first chaos. Let $\Psi^{*}:L^{2}\left(\mathbb{P}\right)\to L^{2}\left(\mu^{*}\right)$
be the adjoint Markov operator, we get that $\Psi^{*}\Psi$ is a Markov
operator on $L^{2}\left(\mu^{*}\right)$ that vanishes on the first
chaos. It can be written as an integral of indecomposable
operators
\[
\Psi^{*}\Psi=\int_{W}\Psi_{w}\rho\left(dw\right),
\]
where $\left(W,\mathcal{W},\rho\right)$ is an auxilliary probability
space. Now the proof follows the same lines as Proposition 4.11 in~\cite{sushis}. We get that $\Psi^{*}\Psi$ is the projection on constants and this implies in turn that the initial joining is trivial, hence a contradiction.

Therefore $c<1$. We deduce that $M$ is a factor of $\left(\Omega,\mathcal{F},\mathbb{P},S\right)$, 
and much as in Section~\ref{subsubsection:removing points}, we obtain a further factor which
is a Poisson point process of intensity $\alpha\mu$ for some $\alpha>0$. (In Section~\ref{subsubsection:removing points}, we got 
such a factor by considering $\left(\Phi\left(N_{\epsilon}\right)\right)_{0}$.)
 \end{proof}

In particular,  following the same proof as in Theorem~5.14 in~\cite{sushis}, we obtain:

\begin{theo}
If $T$ has the (P) property then $\left(X^{*},\mathcal{A}^{*},\mu^{*},T_{*}\right)$
is disjoint from any rank one transformation and any Gaussian dynamical
system.
\end{theo}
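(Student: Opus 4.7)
The plan is to argue by contradiction, using the preceding proposition as the key reduction. Suppose $(\Omega,\mathcal{F},\PP,S)$ is an ergodic probability-preserving system, either rank one or Gaussian, and assume it is not disjoint from $(X^{*},\mathcal{A}^{*},\mu^{*},T_{*})$. By the preceding proposition, there exists $\alpha>0$ such that $(X^{*},\mathcal{A}^{*},(\alpha\mu)^{*},T_{*})$ appears as a factor of $(\Omega,\mathcal{F},\PP,S)$. The task then reduces to showing that a Poisson suspension over $T$ cannot be a factor of a rank one, nor of a Gaussian, ergodic system; the theorem will follow exactly as in Theorem~5.14 of \cite{sushis}, with the previous proposition replacing the measurable law of large numbers assumption used there.

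For the rank one case, I would invoke the classical fact that any factor of a rank one transformation is itself of rank at most one, and in particular has simple spectrum. On the other hand, the Poisson suspension $(X^{*},\mathcal{A}^{*},(\alpha\mu)^{*},T_{*})$ admits the standard chaos decomposition $L^{2}_{0}((\alpha\mu)^{*})=\bigoplus_{n\ge 1}\mathcal{C}_n$ in which each chaos is invariant under $U_{T_{*}}$, with $U_{T_*}\restriction\mathcal{C}_1$ unitarily equivalent to $U_T$ on $L^{2}(\alpha\mu)$. Since the (P) property forces $T$ to have infinite ergodic index on an infinite measure space, $U_T$ already has continuous maximal spectral type, and the higher chaoses stack up to give infinite multiplicity. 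This forbids simple spectrum on $L^2_0$, contradicting the rank one assumption.

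For the Gaussian case the same reduction applies, but the incompatibility must be established more carefully, since in general a factor of a Gaussian system need not be Gaussian. The plan is to follow the final step of the proof of Theorem~5.14 of \cite{sushis}, which compares the Fock-space structures underlying the Gaussian and the Poisson systems through their joining invariants. Concretely, the Poisson factor is generated by the integer-valued observables $N(A)$, $A\in\mathcal{A}_f$, whose joint distribution—Poisson marginals with independence across disjoint sets—is incompatible with their simultaneous realisation as functions of a Gaussian process whose spectral data is imposed by $T$ via the factor map.

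The main obstacle will be the Gaussian step: turning the informal Poisson-versus-Gaussian incompatibility into a rigorous contradiction requires the structural control over non-disjoint joinings supplied by the preceding proposition. Once the reduction ``non-disjoint implies Poisson factor'' is in hand, the spectral/joining computation from Theorem~5.14 of \cite{sushis} applies verbatim, and both the rank one and Gaussian assertions drop out simultaneously.
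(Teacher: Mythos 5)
Your overall strategy---use the preceding proposition to reduce the theorem to the claim that neither a rank one nor a Gaussian system can admit a Poisson suspension $\left(X^{*},\mathcal{A}^{*},\left(\alpha\mu\right)^{*},T_{*}\right)$ as a factor, and then invoke the argument of Theorem~5.14 of \cite{sushis}---is exactly the route the paper takes; indeed the paper's proof consists of nothing more than this reduction together with that citation. The problems lie in the details you add. For the rank one case, the assertion that every factor of a rank one transformation has rank at most one is, as far as is known, an open problem; fortunately you only need that rank one implies simple spectrum and that simple spectrum passes to factors, which is immediate. You then need that the Poisson suspension never has simple spectrum, and here ``the higher chaoses stack up to give infinite multiplicity'' is not a proof: a priori the convolution powers $\sigma^{*n}$ of the maximal spectral type could be pairwise mutually singular with each symmetric tensor power having simple spectrum. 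The correct (classical) argument already takes place in the second chaos: for a continuous measure $\sigma$, the restriction of $U_{T_*}$ there, namely $U_T\odot U_T$, cannot have simple spectrum, because $\sigma\otimes\sigma$ cannot be carried by a single symmetrized atom in each fiber of $(z,w)\mapsto zw$. This should be stated and used explicitly.

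The genuine gap is the Gaussian case. The argument you sketch---that integer-valued observables with Poisson marginals and independence over disjoint sets are ``incompatible with their simultaneous realisation as functions of a Gaussian process''---is not valid: any probability distribution whatsoever can be realised as a measurable function of a single Gaussian variable, so there is no distributional obstruction of this kind. The obstruction is dynamical, not distributional: one must compare the chaos (Fock-space) decompositions on both sides and analyse how an intertwining Markov operator between the Gaussian system and the Poisson suspension interacts with them, in the spirit of the computation with $\Psi^{*}\Psi$ carried out in the proof of the preceding proposition; this is precisely the content of the relevant part of the proof of Theorem~5.14 in \cite{sushis}. As written, your proposal does not contain that argument and explicitly acknowledges the difficulty, so the Gaussian half of the theorem remains unproved in your text.
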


\bibliography{sushi}

\end{document}